\title[Fourth Moment Theorems]
{Fourth Moment Theorems for Markov Diffusion Generators}
\author[Azmoodeh, Campese and Poly]{Ehsan Azmoodeh, Simon Campese  \and
  Guillaume Poly}
\date{\today}
\address{Universit\'e du Luxembourg\\ Facult\'e des Sciences, de la Technologie
  et de la Communication\\ Unit\'e de Recherche en Math\'emathiques\\ \\ 6, rue
  Coudenhove Kalergi \\ 1359 LUXEMBOURG}
\email{ehsan.azmoodeh@uni.lu, simon.campese@uni.lu, guillaume.poly@uni.lu}
\keywords{Markov operator, diffusion generator, eigenfunction, Chaos, Fourth moment theorem}
\subjclass[2010]{60F05, 60J35, 60J60, 60J99, 60H99}
\begin{document}
\begin{abstract}
Inspired by the insightful article~\cite{ledoux_chaos_2012}, we revisit the
Nualart-Peccati-criterion~\cite{nualart_central_2005} (now known as the Fourth
Moment Theorem) from the
point of view of spectral theory of general Markov diffusion generators. We are
not only able to drastically simplify all of its previous proofs, but also to
provide new settings of diffusive generators (Laguerre, Jacobi) where such a
criterion holds. Convergence towards gamma and beta distributions under moment
conditions is also discussed. 
\end{abstract}
\maketitle
\theoremstyle{plain}
\newtheorem{theorem}{Theorem}[section]
\newtheorem{proposition}[theorem]{Proposition}
\newtheorem{lemma}[theorem]{Lemma}
\newtheorem{corollary}[theorem]{Corollary}
\newtheorem*{thm:mix}{Theorem \ref{thm:mix}}

\theoremstyle{definition}
\newtheorem{definition}[theorem]{Definition}

\theoremstyle{remark}
\newtheorem*{remark}{Remark}
\newtheorem*{example}{Example}

\renewcommand{\thesection}{\arabic{section}}

\newcommand{\mi}{\ensuremath{\mathrm{i}}}
\newcommand{\me}{\ensuremath{\mathrm{e}}}
\newcommand{\abs}[1]{\left| #1 \right|}
\newcommand{\id}{\operatorname{I}}

\newcommand{\diff}[1]{\operatorname{d}\ifthenelse{\equal{#1}{}}{\,}{\!#1}}
\newcommand{\E}{\mathbb{E}}
\newcommand{\charop}{\operatorname{\mathbb{L}}}
\newcommand{\on}[1]{\operatorname{#1}}
\newcommand{\proj}[2]{\operatorname{proj}\left( #1 \mid E_{#2} \right)}

\newcommand{\C}{\mathbb{C}}
\newcommand{\R}{\mathbb{R}}
\newcommand{\Q}{\mathbb{Q}}
\newcommand{\N}{\mathbb{N}}

\newcommand{\ud}{\mathrm{d}}
\newcommand{\LL}{\mathrm{\textbf{L}}}
\newcommand{\Id}{\mathrm{\textbf{Id}}}
\newcommand{\K}{\mathrm{\textbf{Ker}}}


\tableofcontents

\section{Introduction}
In 2005, Nualart and Peccati~\cite{nualart_central_2005} discovered the
surprising fact that any sequence of random variables $\{X_n\}_{n\ge 1}$ in a
Gaussian chaos of fixed order converges in distribution towards a standard
Gaussian random variable if and only if $\mathbb{E}(X_n^2)\to 1$ and $
\mathbb{E}(X_n^4)\to 3$. In fact, this result contains the two following
important informations of a different nature:

\begin{itemize}
\item[(i)]For all non-zero $X$ in a Wiener chaos with order $\ge 2$,
  $\E(X^4)>3\E(X^2)^2$, 
\item[(ii)]$\E(X^4)-3\E(X^2)^2\approx 0$ if and only if $X
  \stackrel{Law}{\approx}\mathcal{N}(0,\E(X^2))$. 
\end{itemize}
~\\
This striking discovery, now known as the Fourth Moment Theorem, has been the
starting point of a fruitful line of 
research of which we shall give a quick overview. The proof of the above result
given in~\cite{nualart_central_2005} used the Dambis-Dubins-Schwartz Theorem
(see e.g. \cite[ch. 5]{revuz_continuous_1999}) and did not provide any
estimates. In~\cite{nualart_central_2008}, this phenomenon was translated in
terms  
of Malliavin operators, whereas in \cite{nourdin_steins_2009} these operators
were combined with the bounds arising from Stein's method, thus yielding both a
short proof and precise estimates in the total variation distance (see
also~\cite{nourdin2013optimal}). The 
main difficulty of the proof consists of establishing the powerful inequality
\begin{equation}\label{ineg-princip}
  \operatorname{Var}\big{(}\Gamma(X)\big{)}\leq C \, \big{(}\E(X^4)-3
  \E(X^2)^2\big{)}, 
\end{equation}
where $\Gamma$ is the carr\'{e} du champ operator associated with the
generator of the Ornstein-Uhlenbeck semigroup, from which one can almost
immediately deduce convergence in law towards a standard 
Gaussian distribution $\mathcal{N}(0,1)$ (for instance by Stein's lemma). Other
proofs of the Fourth Moment Theorem, not necessarily relying on the
inequality~\eqref{ineg-princip}, can be found in~\cite{nourdin_yet_2011},
\cite{nourdin_asymptotic_2011} and \cite{ledoux_chaos_2012}.
We also mention~\cite{Speicher-nourdin} for extensions to the free
probability setting, \cite{peccati_gaussian_2005} for the multivariate setting,  
\cite{nourdin_noncentral_2009} for Gamma approximation, as well as \cite{utzet}
for the discrete setting.
It is important to note that virtually all the proofs (with the remarkable
exception of~\cite{ledoux_chaos_2012}), make crucial use of the product formula
for multiple integrals and thus rely on a very rigid structure of the underlying
probability space. As a matter of fact, this approach does not cover other
important structures like Laguerre and Jacobi, which are investigated in the
present article.

In the recent article~\cite{ledoux_chaos_2012}, M. Ledoux gave another proof of
the Fourth Moment Theorem in the general framework of diffusive Markov
generators, adopting a purely spectral point of view. In particular, he
completely avoids the use of product formulae. Unlike the Wiener space setting,
it turns out that in this more general framework it is not sufficient anymore
that a random variable $X$ is only an eigenfunction of the diffusion 
generator for an equality of the type~\eqref{ineg-princip} to hold. By imposing
additional assumptions, one is thus naturally led to a general definition of
chaos. We emphasize that this starting definition of chaos is the cornerstone of
the whole strategy. The definition of "general chaos" given
in~\cite{ledoux_chaos_2012} makes use of iterated gradients and, although
including the Ornstein-Uhlenbeck case, prevents one to reach the Laguerre and  
Jacobi structures. In this article, we keep the insightful idea
from~\cite{ledoux_chaos_2012} of encoding the Fourth Moment Theorem in purely
spectral form, but at the very beginning generalize once more the notion of
"general chaos". As we  
shall see, we say that $X \in \K \left( \LL + \lambda_p \Id \right)$ is a chaos
eigenfunction with respect to a Markov generator $\LL$ with spectrum $\{
\lambda_n \}_{n \geq 0}$, if and only if  
\begin{equation}\label{defi-chaos} 
X^2\in\bigoplus_{\alpha\leq \lambda_{2p}} \K \left( \LL + \alpha \Id \right).
\end{equation}

This definition has the following main advantages.

\begin{itemize}
\item Our definition of chaos covers the definition in~\cite{ledoux_chaos_2012},
    i.e. being a chaos in the sense of Ledoux
    implies~\eqref{defi-chaos}. Besides~\eqref{defi-chaos} seems easier to check
    in practice (see the remark after Theorem~\ref{theorem:2}).
\item We are able to deduce almost immediately that for any $X$ which is a chaos 
  eigenfunction in the above sense we have:
  \begin{equation*}
    \operatorname{Var}(\Gamma(X))\leq C \, \big( \E (X^4)-3\E (X^2)^2 \big).
  \end{equation*}
This drastically simplifies all the known proofs of the Fourth Moment Theorem
which are all based on the above inequality. 
\item
  With our definition of chaos,  we can extend the Fourth Moment Theorem to
    eigenfunctions of the Laguerre generator with any parameters. We mention 
    that, due to links between Hermite polynomials and Laguerre polynomial  
with integer parameters, the chaos of the Laguerre structure with integer
parameters can be plugged into the Wiener chaos. However, for non integer
parameters this case is uncovered by the existing literature and provides a new
framework where the Fourth  
Moment Theorems holds. As an illustrative example of the efficiency of our 
method, we can also mix the structures of Wiener and Laguerre to obtain
theorems of the following nature (a detailed proof will be given in Section~\ref{s-here-we-explain}):
\begin{theorem}\label{thm:mix}
  Let
$\mathcal{X}=\{\pi_{1,\nu}-\nu\,|\,\nu>-1\}\cup\{\mathcal{N}(0,1)\},$
be the set containing all  centered gamma and Gaussian laws.
Let  $\{X_i\}_{i\geq 1}$ be a sequence of independent random variables such that
for all $i\geq 1$, the law of $X_i$ belongs to $\mathcal{X}$. Now choose $d\geq
1$ and let 
\begin{equation*}
  P_n(x)=\sum_{i_1<i_2<\cdots<i_d}
a_n(i_1,\cdots,i_d) x_{i_1}\cdots x_{i_d}
\end{equation*}
be a sequence of multivariate
homogeneous polynomials of degree $d$. Then the two following statements are
equivalent:  
\begin{enumerate}[(i)]
\item  As $n$ tends to infinity, the sequence $\{ P_n(X) \}_{n \ge 1}$ converges
  in distribution towards $\mathcal{N}(0,1)$. 
\item As $n$ tends to infinity, it holds that $\E(P_n(X)^2)\to 1$ and
  $\E(P_n(X)^4)\to 3$. 
\end{enumerate}
\end{theorem}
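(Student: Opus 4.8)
The plan is to realize $P_n(X)$ as a chaos eigenfunction, in the sense of \eqref{defi-chaos}, for the product diffusion generator attached to the sequence $\{X_i\}$, and then to invoke the abstract Fourth Moment Theorem. First I would set up the ambient structure: since each $X_i$ has law in $\mathcal{X}$, it is the invariant measure of either an Ornstein--Uhlenbeck generator (Gaussian case) or a Laguerre generator with parameter $\nu_i$ (gamma case). Each such one-dimensional generator $\LL_i$ is diffusive, admits the orthogonal polynomials as eigenfunctions, and, crucially, has the same spectrum $\{0,1,2,\dots\}$, with the degree-$k$ polynomial lying in $\K(\LL_i+k\,\Id)$. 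I would then form the product generator $\LL=\sum_i \LL_i$ on the product space; by tensorization of the spectrum, a product of one-dimensional eigenfunctions of respective degrees $k_1,k_2,\dots$ is an eigenfunction of $\LL$ with eigenvalue $\sum_i k_i$, i.e. the total polynomial degree.

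With this in place, the key structural observation is that each monomial $x_{i_1}\cdots x_{i_d}$ appearing in $P_n$ is square-free: every variable enters to the first power, and the degree-one polynomial is precisely the first eigenfunction (eigenvalue $1$) in each coordinate. Hence each monomial is an eigenfunction of $\LL$ with eigenvalue exactly $d$, so that $P_n(X)\in\K(\LL+d\,\Id)$ is a genuine eigenfunction. To verify the chaos property \eqref{defi-chaos} I would use homogeneity: $P_n(X)^2$ is a homogeneous polynomial of total degree $2d$, so each of its monomials has total degree $2d$, and decomposing such a monomial in the eigenbasis produces only eigenfunctions whose eigenvalue does not exceed the total degree $2d=\lambda_{2d}$. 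Consequently $P_n(X)^2\in\bigoplus_{\alpha\le\lambda_{2d}}\K(\LL+\alpha\,\Id)$, which is exactly \eqref{defi-chaos}. This identifies $P_n(X)$ as a chaos eigenfunction and, by the results of the paper, immediately yields the variance inequality \eqref{ineg-princip}, namely $\operatorname{Var}(\Gamma(P_n(X)))\le C\,(\E(P_n(X)^4)-3\E(P_n(X)^2)^2)$.

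The equivalence then follows. For (ii)$\Rightarrow$(i), the hypotheses $\E(P_n(X)^2)\to 1$ and $\E(P_n(X)^4)\to 3$ force the right-hand side of \eqref{ineg-princip} to zero, so $\operatorname{Var}(\Gamma(P_n(X)))\to 0$; combined with the eigenfunction identity $\E(\Gamma(P_n(X)))=d\,\E(P_n(X)^2)\to d$, this gives $\Gamma(P_n(X))\to d$ in $L^2$, which is the carr\'e-du-champ condition from which the abstract criterion deduces convergence towards $\mathcal{N}(0,1)$. For the converse (i)$\Rightarrow$(ii), I would invoke hypercontractivity: random variables living in a fixed chaos of order $d$ have all their $L^p$ norms controlled by a fixed multiple of their $L^2$ norm, uniformly in the (finitely many, but unboundedly growing) variables involved. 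This yields uniform integrability of $\{P_n(X)^2\}$ and $\{P_n(X)^4\}$, so convergence in distribution to $\mathcal{N}(0,1)$ upgrades to convergence of the second and fourth moments, giving $\E(P_n(X)^2)\to 1$ and $\E(P_n(X)^4)\to 3$.

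The main obstacle I anticipate is the first step, namely rigorously erecting the mixed product diffusion structure and verifying the chaos property \eqref{defi-chaos} within it. One must confirm that the spectra of the Ornstein--Uhlenbeck and of all the Laguerre generators (over the full range $\nu>-1$, including non-integer parameters) genuinely coincide as $\{0,1,2,\dots\}$ and tensorize additively, and that the multiplication of eigenfunctions respects the degree filtration, so that $P_n(X)^2$ cannot leak into eigenspaces above $\lambda_{2d}$. A secondary technical point is securing hypercontractivity with constants uniform in the number of active coordinates; this is standard for each factor but must be propagated through the (possibly infinite) product.
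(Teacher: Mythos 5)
Your proposal is correct and follows essentially the same route as the paper: build the tensorized generator from the one-dimensional Ornstein--Uhlenbeck and Laguerre generators, observe that the square-free monomials make $P_n(X)$ an eigenfunction with eigenvalue $d$, use the degree argument to place $P_n(X)^2$ in $\bigoplus_{k \leq 2d} \K(\LL + k\,\Id)$ so that the chaos property and the condition $\lambda_{2d} = 2\lambda_d$ hold, and then invoke the abstract Fourth Moment Theorem (Corollary~\ref{cor:Gauss-app}), whose two directions (hypercontractivity for moment convergence, carr\'e du champ convergence for the CLT) you reproduce inline rather than cite. If anything, you supply details the paper's proof leaves implicit, such as why $P_n(X)$ is an eigenfunction and the uniformity of hypercontractive constants across coordinates.
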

We stress that due to the rather complicated structure of the variables (with no
symmetry), such a result seems hardly reachable by using product formulae.
\item We are also able to deal awith beta approximation, i.e. to
    provide conditions on the moments of a sequence $X_n$ of chaotic
    eigenfunctions under which the latter converges in distribution towards a 
    Beta  
    distribution. As we shall see, the only restriction is that the parameters
    $\alpha$ and $\beta$ have to satisfy the inequality $\alpha+\beta \leq
    1$. However, many important special cases like the Arcsine law are
    covered. Moreover our results provide new differential  
inequalities for the beta distributions (in the spirit of
inequality~\eqref{ineg-princip}) which are of independent interest.
\item Finally, we would like to mention that our strategy enables us to extend
  the Nualart-Peccati criterion to other couples of moments than $2$ and 
  $4$. For instance, one can prove that if $\E (X_n^4)\to 3$ and $\E(X_n^6) \to
  15$, then the
Central Limit Theorem holds. Up to now, this has remained an open 
question. Nevertheless, this is topic is under current research and will be
published in a forthcoming article.
\item Due to the simplicity of our proofs which are only of  spectral nature, we
  believe that our strategy can also be applied in the free and discrete
  settings (see \cite{Speicher-nourdin} for Wigner and~\cite{utzet} for Poisson
  chaos). 
\end{itemize}

\section{Main Results}
\label{s-subs-princ-thro}

\subsection{General principle}

Throughout the whole paper, we adopt the setting introduced
in~\cite{ledoux_chaos_2012}. Thus, we fix a probability space
$(E,\mathcal{F},\mu)$ and 
a symmetric Markov generator $-\LL$ with state space $E$ and probability measure
$\mu$ as its  
invariant measure. We assume that $-\LL$ has discrete spectrum $S = \{ \lambda_k
\}_{k \geq 0}$ and order its eigenvalues by magnitude, i.e. $0 = \lambda_0 <
\lambda_1 < \lambda_2 < \dots$. In other words, $-\LL$ is a self-adjoint, linear
operator on $L^2(E,\mu)$ with the property that $- \LL (1) = 0$.
By our assumption on the spectrum, $\LL$ is diagonalizable and we have
that
\begin{equation*}
  L^2(E,\mu)=\bigoplus_{k=0}^\infty \K(\LL+\lambda_k\Id).
\end{equation*}
The orthogonal projection of $X \in L^2(E,\mu)$ on the eigenspace $\K \left( \LL +
  \lambda_k \Id \right)$ will be denoted by $J_k(X)$.
Furthermore, we
define the associated bilinear carr\'e du champ operator $\Gamma$ by
\begin{equation*}
   \Gamma(X,Y) = \frac{1}{2} \left( \LL (XY) - X \LL Y - Y \LL X \right).
\end{equation*}
 If both arguments coincide, we
write $\Gamma(X)$ instead of $\Gamma(X,X)$. It follows from the definition of
$\Gamma$, that for any $X,Y \in L^2(E,\mu)$ the integration by parts formula 
\begin{equation}
  \label{eq:19}
  \int_E^{} \Gamma(X,Y) \diff{\mu} = - \int_E^{} X \LL Y \diff{\mu}=-\int_E^{} Y \LL X \diff{\mu}
\end{equation}
holds. For further details on this setting, we refer
to~\cite{bakry_hypercontractivite_1994} and the forthcoming 
book~\cite{bakry_gentil_ledoux_2013}. 

The following Theorem is the starting point of our investigations.
\begin{theorem}
  {\label{General Principle}}
In the above setting, 
let $\{ X_n\}_{n \geq 1}$ be a sequence in $L^2(E,\mu)$ such that each $X_n$ lies
in a common finite sum of eigenspaces of $\LL$, i.e. there exists $p \geq 0$ such that 
\begin{equation*}
X_n\in\bigoplus_{k=0}^p \K(\LL+\lambda_k \Id)
\end{equation*}
for all $n \in \N$.
Then it holds for any $\eta \geq \lambda_p$ that
\begin{equation}
  \label{eq:15}
  \int_E X_n
  \left( \LL + \eta \Id \right)^2 X_n \diff{\mu}
  \leq
  \eta
  \int_E^{} X_n \left( \LL + \eta \Id \right) X_n \diff{\mu}
  \leq
  c
  \int_E X_n
  \left( \LL + \eta \Id \right)^2 X_n \diff{\mu},
\end{equation}
where $1/c$ is the minimum of the set $\left\{ \eta - \lambda_k \mid 0 \leq k
  \leq p \right\} \setminus \{ 0 \}$.
In particular, the following two conditions are equivalent.
\begin{enumerate}[(i)]
 \item As $n$ tends to infinity, it holds that
   $\int_E X_n
      \left( \LL + \eta \Id \right)^2 X_n \diff{\mu} \to 0$.
\item As $n$ tends to infinity, it holds that 
    $\int_E^{} X_n \left( \LL + \eta \Id \right) X_n \diff{\mu}
    \to 0$.
\end{enumerate}
\end{theorem}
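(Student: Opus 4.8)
The plan is to argue purely spectrally, exploiting the diagonalizability of $\LL$ and the orthogonality of its eigenspaces. First I would decompose each $X_n$ along the eigenspaces it actually meets, writing $X_n = \sum_{k=0}^p J_k(X_n)$ with $J_k(X_n) \in \K(\LL + \lambda_k\Id)$. Since $\LL J_k(X_n) = -\lambda_k J_k(X_n)$, each component is simultaneously an eigenfunction of $\LL + \eta\Id$: one has $(\LL + \eta\Id)J_k(X_n) = (\eta - \lambda_k)J_k(X_n)$ and likewise $(\LL + \eta\Id)^2 J_k(X_n) = (\eta - \lambda_k)^2 J_k(X_n)$. This reduces everything to tracking the scalar weights $\eta-\lambda_k$.

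Next I would insert these decompositions into the two integrals and use that the eigenspaces are mutually orthogonal in $L^2(E,\mu)$, so that all cross terms integrate to zero. Writing $a_k = \eta - \lambda_k$ and $b_k = \int_E J_k(X_n)^2\diff{\mu} \geq 0$, this produces the diagonal representations
\begin{equation*}
  \int_E X_n(\LL+\eta\Id)X_n\diff{\mu} = \sum_{k=0}^p a_k b_k, \qquad \int_E X_n(\LL+\eta\Id)^2 X_n\diff{\mu} = \sum_{k=0}^p a_k^2 b_k.
\end{equation*}
The hypothesis $\eta \geq \lambda_p$ enters precisely here: it guarantees $a_k = \eta - \lambda_k \geq 0$ for all $0 \leq k \leq p$, so both sums have nonnegative terms and the whole problem collapses to a one-dimensional comparison of the weights $a_k$ with their squares $a_k^2$.

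The two inequalities then follow from term-by-term estimates on the weights. For the upper bound, $0 \leq a_k \leq \eta$ forces $a_k^2 \leq \eta\, a_k$, and summing against the $b_k$ gives $\int_E X_n(\LL+\eta\Id)^2 X_n\diff{\mu} \leq \eta \int_E X_n(\LL+\eta\Id)X_n\diff{\mu}$. For the reverse estimate, the indices with $a_k = 0$ contribute nothing to either sum and may be discarded, while for the surviving indices one has $a_k \geq \min\{a_j : 0 \leq j \leq p,\ a_j \neq 0\} = 1/c$, hence $a_k^2 \geq (1/c)\,a_k$; summing yields the matching lower estimate. Together these sandwich one integral between fixed multiples of the other, which gives the stated two-sided comparison and, since $\eta$ and $c$ are independent of $n$, the equivalence of (i) and (ii) is then immediate.

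The computation is otherwise routine; the one step I would flag is the boundary case $\eta = \lambda_p$. There the top weight $a_p$ vanishes, so the $k=p$ component silently drops out of both integrals, and a naive notion of "smallest gap" would return $0$ and render the reverse bound vacuous. This is exactly why the controlling constant is defined through the set $\{\eta - \lambda_k\}\setminus\{0\}$: excluding the zero keeps the relevant gap $1/c$ strictly positive (it becomes $\lambda_p - \lambda_{p-1}$ when $\eta = \lambda_p$), so the lower estimate stays meaningful. Accordingly I would handle the vanishing weights explicitly rather than dividing through by $a_k$, which is the only place where a careless argument could break down.
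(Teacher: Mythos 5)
Your proposal is correct and follows essentially the same route as the paper: decompose $X_n = \sum_{k=0}^p J_k(X_n)$, reduce both integrals to the diagonal sums $\sum_k (\eta-\lambda_k)\int_E J_k(X_n)^2\diff{\mu}$ and $\sum_k (\eta-\lambda_k)^2\int_E J_k(X_n)^2\diff{\mu}$, and compare the weights termwise (the paper phrases the first inequality via the operator identity $(\LL+\eta\Id)^2 = \LL(\LL+\eta\Id)+\eta(\LL+\eta\Id)$, which is just a repackaging of your bound $(\eta-\lambda_k)^2\leq\eta(\eta-\lambda_k)$). Your explicit treatment of the vanishing weights and the role of the set $\{\eta-\lambda_k\}\setminus\{0\}$ is a point the paper leaves implicit, and it is handled correctly.
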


\begin{proof}
It holds that
  \begin{align*}
  \int_E^{} X_n  \left( \LL + \eta \Id \right)^2 X_n 
  \diff{\mu}
  &=
  \int_E^{}
  X_n  \LL \left( \LL + \eta \Id \right)X_n \diff{\mu}
  + \eta
  \int_E^{}
  X_n (\LL +\eta \Id) X_n \diff{\mu}
  \\ &=
  \sum_{k=0}^p (-\lambda_k)(\eta -\lambda_k) \int_E^{} J_k(X_n)^2
  \diff{\mu}
  +
  \eta \int_E^{} X_n \left( \LL + \eta \Id \right) X_n \diff{\mu}
  \\ &\leq
  \eta \int_E^{} X_n \left( \LL + \eta \Id \right)X_n \diff{\mu}
  \end{align*}
  and
  \begin{align*}
    \int_E^{} X_n \left( \LL + \eta \Id \right) X_n \diff{\mu}
    &=
    \sum_{k=0}^{p} (\eta - \lambda_k) \int_E^{} J_k(X_n)^2 \diff{\mu}
    \\ &\leq
    c
    \sum_{k=0}^{p} (\eta-\lambda_k)^2
    \int_E^{} J_k(X_n)^2 \diff{\mu}
    \\ &=
    c \int_E^{} X_n \left( \LL + \eta \Id
    \right)^2 X_n 
    \diff{\mu}.
  \end{align*}
\end{proof}

\begin{remark} 
We will see later that being able to remove the square from the operator 
$\left( \LL +\lambda_p \Id \right)^2$ will prove
itself very useful in both abstract and concrete frameworks. To see the latter,
note that if for example $E = \R^d$ and $\LL$ is a diffusion generator, then
$\left( \LL + \lambda_p \Id \right)^2$ is a differential operator of order four
while the non-squared version only has order two. For example, we are able to
give a drastically simplified proof of the classical Fourth Moment Theorem
of~\cite{nualart_central_2005}. 
\end{remark}

\subsection{Chaos of a Markov generator}

As indicated in the introduction and further detailed below, for a
sequence $(X_n)_{n \geq 1}$ of eigenfunctions of $\LL$, convergence in law
towards many target measures is implied by $L^2(E,\mu)$-convergence of an
expression of the form $\Gamma(X_n) - P(X_n)$ towards zero, where $P(x)$ is
some  polynomial with degree at most two. As by definition $2 \Gamma(X_n) = \left(
  \LL + 2\lambda_p \Id \right) X_n^2$, Theorem~\ref{General Principle} suggests the
following definition of chaos. 

\begin{definition}\label{chaos} An eigenfunction $X$ of the generator
  $-\LL$ with eigenvalue $\lambda_p$ is called a \emph{chaos eigenfunction of order
  $p$}, if and only
  if
  \begin{equation}
    \label{eq:3}
    X^2 \in \bigoplus_{k=0}^{2p} \K(\LL+\lambda_k\Id).
  \end{equation}
\end{definition}

\begin{remark} 
   It is not clear if the $p$th chaos, i.e. the set of all chaos
    eigenfunctions of order $p$, is always a linear subspace of $\K \left( \LL
      +\lambda_p \Id \right)$. Indeed, there is no reason for the product $XY$
    of two chaos eigenfunctions of order $p$ to have an expansion of the
    form~\eqref{eq:3}. However, in many important examples all eigenfunctions
    are chaotic. For example, this phenomenon occurs if the
    eigenfunctions can be represented in terms of multivariate polynomials, as
    is always
    the case in the three most important diffusion structures, namely Wiener,
    Laguerre and Jacobi (see section~\ref{Appli}) and also in discrete settings
    like the Poisson space.
\end{remark}

\begin{remark}[Connection with Ledoux's definition and product fromulae]
  By writing~\eqref{eq:3} in the equivalent 
form 
\begin{equation}
  \label{eq:12}
  \tag{\ref{eq:3}'}
  X^2 = \sum_{k=0}^{2p} J_k \left( X^2 \right),
\end{equation}
one recovers an abstract version of the product formula. As indicated in the
introduction, explicit versions of this formula have been a crucial tool in
virtually all classical proofs of the Fourth Moment Theorem but are not needed
for the method presented here. Note also that the projection $J_k \left( X^2  
\right)$ on $\K \left( 
  \LL + 
  \lambda_k \Id \right)$ is explicitly given by
\begin{equation*}
  J_k \left( X^2 \right)
  =
  \left(
    \prod_{\substack{1 \leq i \leq 2p \\ i \neq k}}^{}
    (\lambda_i - \lambda_k)^{-1} \left( \LL + \lambda_i \Id \right)
  \right) X^2.
\end{equation*}
In the same spirit, we could state~\eqref{eq:3} in yet another equivalent way,
namely 
\begin{equation}
  \label{eq:13}
  \tag{\ref{eq:3}''}
  \left(
    \prod_{i = 0}^{2p}
    \left( \LL + \lambda_i \Id \right)
  \right) X^2
  = 0.
\end{equation}
In this form, through the identity $2 \Gamma(X) = \left( \LL + 2\lambda_p \Id \right) X^2$,
valid for any eigenfunction of $-\LL$ with eigenvalue $\lambda_p$, we see that
Ledoux's definition of chaos in~\cite{ledoux_chaos_2012} is a special case of  
ours. Indeed, his condition $Q_p(\Gamma)(X) = 0$ (see the original article
for a definition of the polynomial $Q_p$ and the operator $Q_p(\Gamma)$) is
equivalent to 
\begin{equation*}
  \left(
  \prod_{i=0}^{p} \left( \LL + 2 \left( \lambda_p - \lambda_i \right) \right)
\right)
X^2 = 0.
\end{equation*}
The restriction that only even eigenspaces are allowed in the expansion of
$X^2$ and that $2 (\lambda_p-\lambda_i)$ does not neccessarily have to lie in
the spectrum of $-\LL$ are lifted by Definition~\ref{chaos}. In particular,
eigenfunctions of the Laguerre and Jacobi generator, which (except for trivial
cases) do not satsify Ledoux's definition of chaos, are always chaotic in our
sense (see 
Section~\ref{Appli}).  
\end{remark}

\section{Fourth Moment Theorems for Diffusion Generators}
\label{s-subs-appr-let}

Still retaining the setting introduced in the previous section, we now and until
the end of this article assume
that the generator $\LL$ is diffusive, i.e. that for any test function
$\phi \in \mathcal{C}^{\infty}(\R)$ and any $X \in L^2(E,\mu)$ it holds that
\begin{equation*}
  \LL \phi(X) = \phi'(X) \LL X + \phi''(X) \Gamma(X).
\end{equation*}
Equivalently, $\Gamma$ is a derivation, in the sense that $\Gamma \left(
  \phi(X),X \right) = \phi'(X) \Gamma(X)$. We also need the technical assumption
that the eigenspaces are hypercontractive
(see~\cite{bakry_hypercontractivite_1994} for sufficient
conditions). 
We will give Fourth Moment Theorems
for convergence of a sequence $\{X_n\}_{n \geq 1}$ of eigenfunctions of $\LL$
towards a Gaussian, Gamma or Beta distribution. When seen as invariant
measures of another diffusive and symmetric Markov generator $\mathcal{L}$ on
$\R$ with discrete spectrum, these three measures are 
the only ones that can arise if one assumes that the eigenfunctions of
$\mathcal{L}$ are orthogonal polynomials (see~\cite{mazet_classification_1997}). Convergence in law
towards each of these distributions is implied
by $L^2(E,\mu)$-convergence of $\Gamma(X_n) - P(X_n)$ towards zero, where $P$ is a polynomial
of degree $0$ (Gaussian distribution), $1$ (Gamma distribution) or $2$ (Beta
distribution). See Table~\ref{tab:targets} for the respective polynomials
and~\cite{ledoux_chaos_2012} for details on how to obtain them in the Gaussian
and Gamma case (the Beta case can be obtained analogously).

\begin{table}
  \centering
  \begin{tabular}{lll}
    target density &$\Gamma$-expression & $\LL$-expression
    \\
    \toprule
    $\frac{\me^{-x^2/2}}{\sqrt{2\pi}}$  & $\Gamma(X) - \lambda_p$ & $\left( \LL
      + 2\lambda_p \Id \right)  H_2(X)$
    \\
    $\frac{1}{\Gamma(\nu)} x^{\nu-1}\me^{-x} \textbf{1}_{(0,\infty)}(x)$    &
    $\Gamma(Y) - \lambda_p Y$ & $\left( \LL +2\lambda_p\Id \right)
    L_2^{(\nu-1)}(Y)$
    \\
    $\frac{1}{B(\alpha,\beta)} x^{\alpha-1}(1-x)^{\beta-1}
    \textbf{1}_{[0,1]}(x)$  & $\Gamma(Y) - \frac{\lambda_p}{\alpha+\beta} Y
    \left( 1 - Y \right)$ & 
    $\left( \LL + 2\lambda_p \frac{\alpha+\beta+1}{\alpha+\beta}   \Id \right)
    P_2^{(\alpha-1,\beta-1)}(1-2Y)$ 
  \end{tabular}
  \caption{$\Gamma$- and corresponding $\LL$-expressions for diffusive
    target distributions. $\LL X=-\lambda_p X$, \, $Y=X+$  mean of target.}
  \label{tab:targets}
\end{table}

Our method now proceeds along the following route. As a first step, we exploit
the identity $2 \Gamma(X_n) = \left( \LL + \lambda_p \Id \right) X_n$,
$\lambda_p$ being the eigenvalue of $X_n$, to obtain an identity of the form
\begin{equation}
  \label{eq:14}
  \Gamma(X_n) - P(X_n) = \left( \LL + a\lambda_p \Id \right) Q(X_n),
\end{equation}
where $a$ is positive real number and $Q$ is a polynomial of degree two (it will
turn out that $Q$ is the second order orthogonal polynomial with respect to the
target measure).
Secondly, we square both sides of~\eqref{eq:14}, integrate and then, imposing
some conditions on $a$, use
Theorem~\ref{General Principle} to remove the square of the integrand on the
right hand side. This allows us to reason that the $L^2(E,\mu)$-convergence of 
$\Gamma(X_n) - P(X_n)$ towards zero is equivalent to the convergence of
\begin{equation}
  \label{eq:16}
  \int_E^{}
  Q(X_n)
  \left( \LL + a\lambda_{p} \Id \right) Q(X_n) \diff{\mu}
\end{equation}
towards zero. As a last step, we notice that due to the derivation property of
$\Gamma$, the integral~\eqref{eq:16} can be expressed as a linear 
combination of the first four moments of $X_n$, denoted by $m_k(X_n)$ in the
sequel ($1 \leq k \leq 4$). This is the content of the next Lemma.

\begin{lemma}
  \label{lem:2}
  Let $X$ be an eigenfunction of $\LL$ with eigenvalue $\lambda_p$, $a \in \R$
  and $Q$ be a polynomial of degree two in one variable. Then it holds that
  \begin{equation}
    \label{eq:17}
  \int_E^{}
  Q(X)
  \left( \LL + a\lambda_{p} \Id \right) Q(X) \diff{\mu}
  =
  \lambda_p
  \int_{\R}
  \left(
    a Q^2(x)
    -
    \frac{\left(Q'(X)\right)^3 X}{3 Q''(X)}
  \right)
  \diff{\mu}.
\end{equation}
\end{lemma}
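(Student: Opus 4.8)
The plan is to reduce the left-hand side of~\eqref{eq:17} to integrals of polynomials in $X$ by two successive applications of the integration by parts formula~\eqref{eq:19}, exploiting both the derivation property of $\Gamma$ and the fact that $X$ is an eigenfunction. First I would split the integral as
\begin{equation*}
  \int_E Q(X) \left( \LL + a \lambda_p \Id \right) Q(X) \diff{\mu}
  =
  \int_E Q(X) \LL Q(X) \diff{\mu}
  +
  a \lambda_p \int_E Q(X)^2 \diff{\mu},
\end{equation*}
and apply~\eqref{eq:19} to the first term to get $\int_E Q(X) \LL Q(X) \diff{\mu} = - \int_E \Gamma \left( Q(X) \right) \diff{\mu}$. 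Invoking the derivation property in the form $\Gamma \left( Q(X) \right) = Q'(X)^2 \, \Gamma(X)$, this first term becomes $- \int_E Q'(X)^2 \, \Gamma(X) \diff{\mu}$.

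The next step is to remove the remaining factor $\Gamma(X)$ by a second integration by parts. Letting $\Psi$ denote any antiderivative of $(Q')^2$, the derivation property gives $\Gamma \left( \Psi(X), X \right) = \Psi'(X) \Gamma(X) = Q'(X)^2 \, \Gamma(X)$, so that~\eqref{eq:19} yields
\begin{equation*}
  \int_E Q'(X)^2 \, \Gamma(X) \diff{\mu}
  =
  \int_E \Gamma \left( \Psi(X), X \right) \diff{\mu}
  =
  - \int_E \Psi(X) \, \LL X \diff{\mu}
  =
  \lambda_p \int_E \Psi(X) \, X \diff{\mu},
\end{equation*}
where in the last equality I use that $X$ is an eigenfunction, i.e. $\LL X = - \lambda_p X$. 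Combining the two computations already presents the full left-hand side as $\lambda_p \int_E \left( a Q(X)^2 - \Psi(X) X \right) \diff{\mu}$.

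It remains only to identify the antiderivative $\Psi$ with the explicit expression appearing on the right-hand side of~\eqref{eq:17}. The key observation, and really the only point where the hypothesis that $Q$ has degree two is used, is that $Q''$ is a nonzero constant; hence $\tfrac{d}{dx} \left( (Q')^3 / (3 Q'') \right) = (Q')^2$, so one may take $\Psi = (Q')^3 / (3 Q'')$. Substituting this choice finishes the proof, and since $Q$ has degree two both $Q(X)^2$ and $Q'(X)^3 X / Q''$ are polynomials in $X$ of degree at most four, which confirms the concluding remark that~\eqref{eq:17} is a linear combination of the first four moments of $X$. I do not expect a genuine obstacle here: the argument is merely two integrations by parts together with one derivation identity, and the only thing to watch is that the assumption "degree exactly two" guarantees $Q'' \neq 0$, so that the quotient in~\eqref{eq:17} is well defined.
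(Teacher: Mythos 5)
Your proof is correct and follows essentially the same route as the paper: integration by parts to produce $-\int_E (Q'(X))^2\,\Gamma(X)\diff{\mu}$, then a second integration by parts after recognizing $(Q')^2$ as the derivative of $(Q')^3/(3Q'')$ (valid precisely because $Q''$ is a nonzero constant), and finally the eigenfunction identity $\LL X = -\lambda_p X$. Your explicit remark that the degree-two hypothesis is what makes $Q''$ constant is the same observation the paper leaves implicit.
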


\begin{remark}
  Note that as the degree of $Q$ is two, $Q'(x)/Q''(x)$ is a polynomial and so
  the integral on the right  
  hand side of~\eqref{eq:17} is always well defined.
\end{remark}

\begin{proof}
  Using the integration by parts formula and the derivation property of
  $\Gamma$, we obtain that
  \begin{align*}
    \int_E^{}
    Q(X) \LL Q(X) \diff{\mu}
    =
    -
    \int_E^{}
    \Gamma(Q(X))
    \diff{\mu}
    &=
    -
    \int_E^{}
    \left( Q'(X) \right)^2 \Gamma(X)
    \diff{\mu}
    \\ &=
    -
    \int_E^{}
    \Gamma \left(
      \frac{\left( Q'(X) \right)^3}{3 Q''(X)}
      ,
      X
    \right)
    \\ &=
    \int_E^{}
    \frac{\left( Q'(X) \right)^3}{3 Q''(X)}
    \LL X
    \diff{\mu}
    \\ &=
    - \lambda_p
    \int_E^{}
    \frac{\left( Q'(X) \right)^3}{3 Q''(X)}
    X
    \diff{\mu},
  \end{align*}
  which yields the stated result.
\end{proof}

Using this method, we now proceed to proof Fourth Moment Theorems for the
caseses where the target measure is Gaussian, Gamma or Beta. Note that our
approach in principle also works for 
other target measures $\mu$, as long as it admits moments of all orders 
(i.e. $\int_{\R} x^n \mu \left( \diff{x} \right)$ is finite for all $n \geq
1$). Indeed, if  
this is the case, one can obtain the corresponding sequence $(P_n)_{n \geq 0}$
of orthogonal polynomials, calculate the (unique) constant $a$ such that
\begin{equation*}
  a P_n^2(x) - \frac{\left( P_2'(x) \right)^2 x}{3 P_2''(x)}
  =
  \sum_{i=1}^{2n} a_i P_n(x)
\end{equation*}
and then use Lemma~\ref{lem:2} to obtain the $\LL$-expression (and by
integration by parts also the $\Gamma$-expression).
For clarity of exposition we present our results in finite dimension, but
everything remains valid in the infinite dimensional setting by a simple limit
procedure. It is also remarkable to note that the constant appearing in the
bounds of the $\Gamma$-expressions by the corresponding moments are independent of the
dimension of the state space.

\subsection{Gaussian approximation}

In order to converge towards a standard Gaussian distribution, we have to
control the quantity $\Gamma(X) - \lambda_p$. The next Theorem gives a precise
bound in terms of moments. 

\begin{theorem}
  \label{theorem:2}
  Let $X$ be a chaos eigenfunction of order $p$ with respect to $-\LL$ and
  assume that $\lambda_{2p} \leq 2\lambda_p$. Then it holds that
  \begin{equation*}
    \int_E^{} \left( \Gamma(X) - \lambda_p \right)^2 \diff{\mu}
    \leq
    \frac{\lambda_p^2}{3}
    \big(
      m_4(X) - 6m_2(X) + 3
    \big).
  \end{equation*}
\end{theorem}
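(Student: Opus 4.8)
The plan is to follow the three-step route that the paper outlines immediately before Lemma~\ref{lem:2}. First I would produce the $\LL$-expression of $\Gamma(X)-\lambda_p$. Since $X$ is an eigenfunction with eigenvalue $\lambda_p$, we have $2\Gamma(X)=(\LL+2\lambda_p\Id)X^2$, and since the target is Gaussian the relevant second-order orthogonal polynomial is the Hermite polynomial $H_2(x)=x^2-1$. A direct computation using $\LL X=-\lambda_p X$ should give the identity $\Gamma(X)-\lambda_p=\tfrac12(\LL+2\lambda_p\Id)H_2(X)$, matching the first row of Table~\ref{tab:targets} (here the constant is $a=2$ and $Q=\tfrac12 H_2$). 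The key point is that $H_2(X)=X^2-1$ is, up to the constant, just $X^2$ shifted, so its expansion lives in $\bigoplus_{k=0}^{2p}\K(\LL+\lambda_k\Id)$ precisely by the chaos hypothesis~\eqref{eq:3}.

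Second, I would square both sides of this identity and integrate against $\mu$. The left side becomes $\int_E(\Gamma(X)-\lambda_p)^2\diff{\mu}$. For the right side I want to invoke Theorem~\ref{General Principle} with $\eta=2\lambda_p$ to strip one power of the operator. This is exactly the step where the hypothesis $\lambda_{2p}\leq 2\lambda_p$ is needed: $H_2(X)$ sits in the finite sum of eigenspaces up to index $2p$, so the relevant top eigenvalue is $\lambda_{2p}$, and Theorem~\ref{General Principle} requires $\eta\geq\lambda_{2p}$, which the assumption guarantees for $\eta=2\lambda_p$. The inequality~\eqref{eq:15} then yields
\begin{equation*}
  \int_E\bigl(\Gamma(X)-\lambda_p\bigr)^2\diff{\mu}
  \leq
  \frac{\lambda_p^2}{1}\cdot\eta\int_E Q(X)(\LL+2\lambda_p\Id)Q(X)\diff{\mu},
\end{equation*}
so that up to explicit constants the quadratic expression is controlled by the linear (non-squared) expression~\eqref{eq:16}.

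Third, I would apply Lemma~\ref{lem:2} with $a=2$ and $Q=\tfrac12 H_2$ to rewrite that linear expression as an integral of $aQ^2-\tfrac{(Q')^3 X}{3Q''}$ against $\mu$. With $Q'(x)=x$ and $Q''(x)=1$ this integrand is a concrete polynomial in $x$, and integrating term by term produces a linear combination of the moments $m_2(X)$ and $m_4(X)$ (plus the constant from $Q^2$). Collecting the coefficients should give exactly $\tfrac{\lambda_p^2}{3}(m_4(X)-6m_2(X)+3)$, which is the claimed bound.

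The step I expect to be the main obstacle is the second one: correctly tracking the constants when applying Theorem~\ref{General Principle}, and verifying that the hypothesis $\lambda_{2p}\leq 2\lambda_p$ is precisely what makes $\eta=2\lambda_p$ an admissible choice with $\eta\geq\lambda_{2p}$. Once the operator is stripped, the remaining work is the bookkeeping of Lemma~\ref{lem:2}, which is purely a moment computation and should present no conceptual difficulty. It will also be worth double-checking that the factor $\lambda_p^2/3$ emerges cleanly from combining the $\eta=2\lambda_p$ factor with the $\tfrac13$ appearing in~\eqref{eq:17}.
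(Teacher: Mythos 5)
Your proposal follows essentially the same route as the paper's own proof: the identity $\Gamma(X)-\lambda_p=\tfrac12\left(\LL+2\lambda_p\Id\right)H_2(X)$, symmetry of $\LL+2\lambda_p\Id$ to pass to $\int_E H_2(X)\left(\LL+2\lambda_p\Id\right)^2H_2(X)\diff{\mu}$, Theorem~\ref{General Principle} with $\eta=2\lambda_p$ (admissible precisely because the chaos property puts $H_2(X)$ in $\bigoplus_{k=0}^{2p}\K(\LL+\lambda_k\Id)$ and $\lambda_{2p}\leq 2\lambda_p$), and finally Lemma~\ref{lem:2} for the moment bookkeeping. The only blemish is the garbled prefactor $\frac{\lambda_p^2}{1}\cdot\eta$ in your intermediate display: the General Principle contributes exactly the factor $\eta=2\lambda_p$, and Lemma~\ref{lem:2} applied to $Q=\tfrac12 H_2$ gives $\frac{\lambda_p}{6}\left(m_4(X)-6m_2(X)+3\right)$, so the product is indeed the claimed $\frac{\lambda_p^2}{3}\left(m_4(X)-6m_2(X)+3\right)$.
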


\begin{proof}
  First note that $\Gamma(X) - \lambda_p = \left( \LL + 2\lambda_p \Id\right) \frac{1}{2} H_2(X)$,
  where $H_2(x)= x^2 -1$ is the second Hermite polynomial.
  Thus, by symmetry of the operator $\LL + 2\lambda_p \Id$, we get that 
  \begin{align*}
    \int_E^{} \left( \Gamma(X) - \lambda_p \right)^2 \diff{\mu}
    &=
    \frac{1}{4}
    \int_E^{}
    \left( 
      \left( \LL + 2\lambda_p \Id \right) H_2(X)
       \right)^2
    \diff{\mu}
    \\ &=
    \frac{1}{4}
    \int_E^{}
    H_2(X)
   \left( \LL + 2\lambda_p \Id \right)^2 H_2(X)
   \diff{\mu}.
 \end{align*}
 As $X^2$ and therefore $H_2(X)$ has an expansion on the first $2p$
  eigenspaces and we assume that $\lambda_{2p} \leq 2 \lambda_p$, we can apply
  Theorem~\ref{General Principle} to obtain that
  \begin{equation*}
    \int_E^{} \left( \Gamma(X) - \lambda_p \right)^2 \diff{\mu}
    \leq
    \frac{\lambda_p}{2}
    \int_E^{} H_2(X) \left( \LL + 2\lambda_p \Id \right) H_2(X) \diff{\mu}. 
  \end{equation*}
  Finally we apply Lemma~\ref{lem:2} and obtain that
  \begin{align*}
    \int_E^{} H_2(X) \LL_{2\lambda_p} H_2(X) \diff{\mu}
    &=
    \lambda_p \int_E^{} \left( 2 H_2(X)^2 - \frac{\left( H_2'(X) \right)^3
        X}{3H_2''(X)} \right) \diff{\mu}
    \\ &=
    \lambda_p \int_E^{} \left( 2 \left( X^2 - 1 \right)^2 - \frac{4}{3} X^4
    \right) \diff{\mu}
    \\ &=
    \frac{2\lambda_p}{3}
    \big( m_4(X) - 6 m_2(X) + 3 \big).
  \end{align*}
\end{proof}

\begin{remark}
  Note that the condition $\lambda_{2p} \leq 2 \lambda_p$ is always satisifed if
  $X$ is chaotic in the sense of Ledoux. As a matter of fact,
  Theorem~\ref{theorem:2} shows that the assumed spectral condition (17) of
  Corollary 7 in~\cite{ledoux_chaos_2012} always holds in this case. 
\end{remark}

\begin{remark}
 By exploiting the fact that $X^2-m_2(X)$ is centered, we have
  \begin{equation*}
    \int_E^{} (X^2-m_2(X)) \left( \LL + \lambda_1\Id \right) \left( \LL +
    2\lambda_p \Id\right) (X^2-m_2(X)) \diff{\mu} \leq 0
  \end{equation*}
and proceed as in the proof of Theorem~\ref{General Principle} to get  \begin{equation*}
    4 \int_E^{} \left( \Gamma(X) - \lambda_p m_2(X)\right)^2 \diff{\mu}
    \leq
    \left( 2\lambda_p - \lambda_1 \Id\right)
    \int_E^{}
    (X^2-m_2(X)) \left( \LL + 2\lambda_p \Id\right) (X^2-m_2(X)) \diff{\mu}.
  \end{equation*}
  This yields the better estimate
  \begin{equation*}
    \int_E^{} \left( \Gamma(X) - \lambda_p m_2(X)\right)^2 \diff{\mu}
    \leq
    \left( \frac{\lambda_p^2}{3} - \frac{\lambda_1\lambda_p}{6} \right)
    \big( m_4(X) - 3m_2^2(X) \big).
  \end{equation*}
\end{remark}

\begin{corollary}[Fourth Moment Theorem for Gaussian Approximation]\label{cor:Gauss-app} Let $\{X_n\}_{n \geq 1}$ be a sequence of
  chaos eigenfunctions of order $p$ with respect to the operator
  $-\LL$, bounded in $L^2(E,\mu)$. Then, if $\lambda_{2p} \leq
  2\lambda_{p}$, the following two assertions are equivalent. 
\begin{enumerate}[(i)]
 \item As $n$ tends to infinity, the sequence $\{X_n\}_{n \geq 1}$
converges in distribution to a standard Gaussian distribution.
 \item As $n$ tends to infinity, it holds that $m_{4}(X_n) - 6m_2(X_n) + 3 \to
   0$. 
\end{enumerate}
\end{corollary}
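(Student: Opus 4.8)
The plan is to prove the two implications separately, drawing on Theorem~\ref{theorem:2} for the direction (ii)$\Rightarrow$(i) and on the hypercontractivity of the eigenspaces for the reverse implication.

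For (ii)$\Rightarrow$(i), I would first feed the hypothesis $m_4(X_n) - 6m_2(X_n) + 3 \to 0$ directly into the bound of Theorem~\ref{theorem:2}, whose spectral assumption $\lambda_{2p} \leq 2\lambda_p$ is precisely the one we have imposed. This yields $\int_E (\Gamma(X_n) - \lambda_p)^2 \diff{\mu} \to 0$, that is, $\Gamma(X_n) \to \lambda_p$ in $L^2(E,\mu)$. The second ingredient is a Stein-type identity obtained from the eigenfunction relation $\LL X_n = -\lambda_p X_n$ together with the integration-by-parts formula~\eqref{eq:19} and the derivation property $\Gamma(\phi(X_n),X_n) = \phi'(X_n)\Gamma(X_n)$: for any smooth $\phi$ with bounded derivative one finds $\E[X_n \phi(X_n)] = \frac{1}{\lambda_p}\E[\phi'(X_n)\Gamma(X_n)]$. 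Combining the two gives $\E[\phi'(X_n) - X_n\phi(X_n)] = \E[\phi'(X_n)(1 - \lambda_p^{-1}\Gamma(X_n))]$, which tends to zero by Cauchy--Schwarz since $\phi'$ is bounded and $\lambda_p^{-1}\Gamma(X_n) \to 1$ in $L^2$. Specialising to $\phi = \phi_h$, the solution of the Stein equation $\phi'(x) - x\phi(x) = h(x) - \E[h(Z)]$ for a bounded Lipschitz $h$ (with $Z \sim \mathcal{N}(0,1)$, so that $\phi_h$ and $\phi_h'$ are bounded), we obtain $\E[h(X_n)] \to \E[h(Z)]$ for every such $h$, which is convergence in distribution to $\mathcal{N}(0,1)$.

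For (i)$\Rightarrow$(ii), convergence in law alone does not transfer to convergence of moments, and this is where I would invoke the standing hypercontractivity assumption. Since each $X_n$ lies in the fixed finite sum $\bigoplus_{k=0}^p \K(\LL + \lambda_k\Id)$ and the sequence is bounded in $L^2(E,\mu)$, hypercontractivity upgrades this to a uniform bound $\sup_n \E[\abs{X_n}^q] < \infty$ for every $q < \infty$. In particular the families $\{X_n^2\}$ and $\{X_n^4\}$ are uniformly integrable, so that convergence in distribution to $Z \sim \mathcal{N}(0,1)$ forces $m_2(X_n) \to \E[Z^2] = 1$ and $m_4(X_n) \to \E[Z^4] = 3$. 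Hence $m_4(X_n) - 6m_2(X_n) + 3 \to 3 - 6 + 3 = 0$.

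The main obstacle---and the only place where the structural hypotheses do real work---is the passage between convergence in distribution and convergence of the fourth moment. The Stein-equation argument yields weak convergence from the $L^2$-convergence $\Gamma(X_n) \to \lambda_p$ without difficulty, but extracting moment convergence in the reverse direction genuinely requires the uniform $L^q$-control furnished by hypercontractivity; without it the fourth moment could escape to infinity along a subsequence while the law still converged. Everything else is a routine combination of Theorem~\ref{theorem:2}, the integration-by-parts formula~\eqref{eq:19}, and the derivation property of $\Gamma$.
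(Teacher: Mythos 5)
Your proof is correct, and its skeleton coincides with the paper's: both directions hinge on Theorem~\ref{theorem:2} (for (ii)$\Rightarrow$(i)) and on hypercontractivity plus uniform integrability (for (i)$\Rightarrow$(ii)); your treatment of the latter direction is in fact spelled out more carefully than the paper's one-line appeal to the continuous mapping theorem. The genuine difference is how you convert the $L^2(E,\mu)$-convergence $\Gamma(X_n)\to\lambda_p$ into weak convergence. The paper works on the Fourier side: from the identity $\int_E \me^{\mi\xi X_n}\Gamma(X_n)\diff{\mu} = \frac{\lambda_p}{\mi\xi}\int_E X_n \me^{\mi\xi X_n}\diff{\mu}$ (the same integration-by-parts and derivation computation you perform, but applied to $\phi(x)=\me^{\mi\xi x}$), it deduces that the characteristic function $\hat\rho$ of any subsequential weak limit solves $\hat\rho(\xi) = -\xi^{-1}\,\hat\rho'(\xi)$, whose only solution with $\hat\rho(0)=1$ is $\me^{-\xi^2/2}$; this route needs tightness and a subsequence-extraction step, both supplied by the $L^2(E,\mu)$-bound. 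You instead run Stein's method: the same identity, applied to the solution $\phi_h$ of the Stein equation, bounds $\abs{\E[h(X_n)]-\E[h(Z)]}$ by $\|\phi_h'\|_\infty \, \|1-\lambda_p^{-1}\Gamma(X_n)\|_{L^2(E,\mu)}$, which gives weak convergence directly, with no subsequence argument, and comes with an explicit rate --- this is precisely the strengthening the paper only alludes to in the remark following the corollary, with references to Ledoux and Nourdin--Peccati. One small point to patch in your version: the diffusion/derivation property is stated for $\phi\in\mathcal{C}^{\infty}(\R)$, whereas $\phi_h$ for Lipschitz $h$ is in general only $C^1$ with Lipschitz derivative, so strictly speaking you need a mollification step (or restrict to smooth $h$ with bounded derivatives, which still characterizes convergence in distribution); the paper's choice $\phi(x)=\me^{\mi\xi x}$ sidesteps this regularity issue entirely.
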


\begin{proof} $(i) \to (ii)$: By hypercontractivity, the fact that
  the 
  sequence $\{X_n\}_{n \geq 1}$ is bounded in $L^2(E,\mu)$ implies that it is also 
bounded in $L^{r}(E,\mu)$ for any $r \geq 1$. Consequently, we obtain that $m_{4}(X_n)
- 6m_2(X_n)+ 3 \to  0$ by the continuous mapping theorem. 

$(ii) \to (i)$: By the integration by parts formula for $\Gamma$, we obtain that
\begin{equation*}
  \int_E^{} e^{\mi \xi X_n} \Gamma(X_n) \diff{\mu}
  =
  \frac{\lambda_p}{i \xi}
  \int_E^{}
   X_n e^{\mi \xi X_n} \diff{\mu}.
\end{equation*} By (ii), we have $\Gamma(X_n) \stackrel{L^2}{\approx}
\lambda_p$, implying that $\int_{E} e^{\mi \xi X_n} \diff{\mu}
\approx \frac{1}{i \xi} \int_{E} X_n e^{i \xi X_n} \diff{\mu}$. For any limit
$\rho$ of any subsequence of $X_n$ we get
\begin{equation}\label{ODE-Guass} \hat\rho (\xi) = - \frac{1}{\xi} \frac{\ud
\hat \rho (\xi)}{\ud \xi},
\end{equation}
where $\hat\rho$ denotes the Fourier transform of $\rho$, and we
conclude the proof by noting that the only solution of the
above differential equation satisfying $\hat \rho(0) = 1$ is given by $\hat \rho
(\xi) = e^{- \frac{\xi^2}{2}}$. 
\end{proof}

\begin{remark} \hfill
  \begin{enumerate}[(i)]
  \item If we additionally assume that $m_2(X_n) = 1$, we can replace
    condition (ii) by $m_4(X_n) - 3 \to 0$.
  \item  To obtain convergence in stronger distances with precise
estimates using Stein's method, we refer to \cite[page. 8]{ledoux_chaos_2012} and
\cite[page. 63]{nourdin_normal_2012}.
  \end{enumerate}

\end{remark}

\subsection{Gamma Approximation}

\begin{theorem}\label{Gamma-aprox} Let $X$ be a chaos eigenfunction with eigenvalue $\lambda_p$ with respect to the operator $-\LL$ such that $
2 \lambda_p \leq \lambda_{2p}$ and set $Y = X+\nu$ for some $\nu > 0$. Then it holds that  
\begin{equation*}
  \int_E^{} \left( \Gamma(Y) - \lambda_p Y \right)^2 \diff{\mu}
  \leq
  \frac{\lambda_p^2}{3}
  \big(
    m_4(X) - 6m_3(X)
   + 6 \left( 1 - \nu \right) m_2(X)
   + 3\nu^2
   \big).
\end{equation*}
\end{theorem}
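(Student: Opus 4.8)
The plan is to run the three-step scheme described just before Lemma~\ref{lem:2}, with the second generalized Laguerre polynomial taking over the role played by $H_2$ in the Gaussian case of Theorem~\ref{theorem:2}. The cornerstone is a purely algebraic identity. Since $Y = X + \nu$ differs from $X$ by a constant we have $\Gamma(Y) = \Gamma(X)$, and combining the eigenfunction relation $\LL X = -\lambda_p X$ with $2\Gamma(X) = \left( \LL + 2\lambda_p \Id \right) X^2$, a direct expansion gives
\[
  \Gamma(Y) - \lambda_p Y = \left( \LL + 2\lambda_p \Id \right) L_2^{(\nu-1)}(Y),
\]
the $\LL$-expression recorded in Table~\ref{tab:targets}. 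Written as a polynomial in $X$, the relevant quadratic is $L_2^{(\nu-1)}(X+\nu) = \tfrac12 X^2 - X - \tfrac{\nu}{2}$, so the verification reduces to expanding both sides and using $\LL X = -\lambda_p X$. Isolating the correct polynomial $Q = L_2^{(\nu-1)}$ and the correct constant ($a = 2$ in the notation of Lemma~\ref{lem:2}) is the one genuinely creative step; once it is in place the rest is mechanical.

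Next I would square the identity and integrate against $\mu$. By $\mu$-symmetry of $\LL + 2\lambda_p \Id$,
\[
  \int_E \left( \Gamma(Y) - \lambda_p Y \right)^2 \diff{\mu}
  = \int_E L_2^{(\nu-1)}(Y) \left( \LL + 2\lambda_p \Id \right)^2 L_2^{(\nu-1)}(Y) \diff{\mu}.
\]
Because $L_2^{(\nu-1)}(Y)$ is a polynomial of degree at most two in the chaos eigenfunction $X$, Definition~\ref{chaos}, i.e. $X^2 \in \bigoplus_{k=0}^{2p} \K(\LL + \lambda_k \Id)$, forces $L_2^{(\nu-1)}(Y)$ into $\bigoplus_{k=0}^{2p} \K(\LL + \lambda_k \Id)$ as well. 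This is exactly the situation handled by Theorem~\ref{General Principle} with $\eta = 2\lambda_p$, and the spectral hypothesis relating $2\lambda_p$ and $\lambda_{2p}$ is precisely what lets that Theorem strip off one copy of the operator, leaving
\[
  \int_E \left( \Gamma(Y) - \lambda_p Y \right)^2 \diff{\mu}
  \leq 2\lambda_p \int_E L_2^{(\nu-1)}(Y) \left( \LL + 2\lambda_p \Id \right) L_2^{(\nu-1)}(Y) \diff{\mu}.
\]

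Finally I would evaluate the right-hand side with Lemma~\ref{lem:2}, applied with $a = 2$ and $Q(x) = \tfrac12 x^2 - x - \tfrac{\nu}{2}$, so that $Q'(x) = x - 1$ and $Q''(x) = 1$. Expanding $2 Q^2(x) - \tfrac13 \left( Q'(x) \right)^3 x$ produces a quartic whose integral against $\mu$ is a linear combination of $m_2(X)$, $m_3(X)$, $m_4(X)$ and a leftover multiple of $m_1(X)$; the latter drops out because $X$, being a nonconstant eigenfunction, is orthogonal to the constants, i.e. $m_1(X) = 0$. Collecting terms gives $\tfrac{\lambda_p}{6} \left( m_4(X) - 6 m_3(X) + 6(1-\nu) m_2(X) + 3\nu^2 \right)$, and multiplying by $2\lambda_p$ produces the claimed bound with constant $\tfrac{\lambda_p^2}{3}$.

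I expect two points to require care. The first is the guess of the Laguerre polynomial and the verification of the operator identity of the first paragraph, which is what ties the abstract machinery to the Gamma target. The second, and more delicate, is the spectral comparison: Theorem~\ref{General Principle} turns the squared operator into the single operator in the direction giving an \emph{upper} bound only when $\eta = 2\lambda_p$ dominates the largest eigenvalue $\lambda_{2p}$ entering the expansion of $X^2$. Checking that the hypothesis on $\lambda_{2p}$ and $2\lambda_p$ delivers this ordering is the sole spectral input of the argument, and it is the step where I would be most attentive.
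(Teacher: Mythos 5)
Your proof is correct and follows essentially the same route as the paper: the identity $\Gamma(Y)-\lambda_p Y = \left( \LL+2\lambda_p \Id \right) L_2^{(\nu-1)}(Y)$, then Theorem~\ref{General Principle} with $\eta=2\lambda_p$ to remove the square, then Lemma~\ref{lem:2} with $a=2$ and $Q(x)=\tfrac{1}{2}x^2-x-\tfrac{\nu}{2}$ (your writing of $Q$ directly in the eigenfunction $X$ is in fact slightly cleaner than the paper's display, which carries the cubic correction term in $Y$). The spectral ordering you rightly flag --- $2\lambda_p$ must dominate $\lambda_{2p}$, i.e. $\lambda_{2p}\leq 2\lambda_p$ --- is exactly what the paper's own proof and the subsequent corollary assume, the reversed inequality in the theorem's statement being a typo.
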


\begin{proof}
  Using the identities $\LL Y =  - \lambda_p (Y-\nu)$ and
  $2 \Gamma(Y) = \left( \LL + 2\lambda_p \right) (Y - \nu)^2 = \left( \LL + 2\lambda_p \Id
  \right) Y^2 - 2\lambda_p \nu Y $ it is straightforward to verify that
  \begin{equation*}
    \Gamma(Y) - \lambda_p Y
    =
    \left( \LL + 2\lambda_p \Id \right) L_2^{(\nu-1)}(Y),
  \end{equation*}
  where $L_2^{(\nu-1)}(x)= \frac{x^2}{2} - (\nu+1)x + \frac{\nu (\nu+1)}{2}$ is
  the second Laguerre polynomial with parameter $\nu-1$. Due to the chaos
  property of $X$ and the assumption that $\lambda_{2p} \leq 2\lambda_p$, we can
  apply Theorem~\ref{General Principle} to get that
  \begin{equation*}
    \int_E^{} \left(
      \left( \LL + 2\lambda_p \Id \right) L_2^{(\nu-1)}(Y)
    \right)^2 \diff{\mu}
    \leq
    2\lambda_p
    \int_E^{}
    L_2^{(\nu-1)}(Y) \left( \LL + 2\lambda_p \Id \right) L_2^{(\nu-1)}(Y)
    \diff{\mu}.
  \end{equation*}
  Simple calculations after an application of Lemma~\ref{lem:2} now give
  \begin{align*}
    \int_E^{}
    L_2^{(\nu-1)}(Y) &\left( \LL + 2\lambda_p \Id \right) L_2^{(\nu-1)}(Y)
    \diff{\mu}
    \\ &=
    \lambda_p
    \int_E^{}
    \left(
      2 L_2^{(\nu-1)}(Y)^2 - \frac{\left( L_2^{(\nu-1)'}(Y) \right)^3 Y}{3
        L_2^{(\nu-1)''}(Y)} 
    \right)
    \diff{\mu}
    \\ &=
    \lambda_p \left(
      \frac{\nu^2}{2}
      +
      \left( 1 - \nu \right)
      m_2(X)
      -
      m_3(X)
      +
      \frac{m_4(X)}{6}
    \right),
  \end{align*}
  concluding the proof.
\end{proof}

\begin{corollary}[Fourth Moment Theorem for Gamma Approximation] Let $\{X_n\}_{n \geq 1}$ be a sequence bounded in $L^2(E,\mu)$ of chaos eigenfunctions
  of order $p$ with respect to the operator $-\LL$ such that $\lambda_{2p} \leq
  2\lambda_p$, and set $Y_n = X_n + \nu$. Then, the following two 
  assertions are equivalent:
\begin{enumerate}[(i)]
 \item As $n$ tends to infinity, the sequence $\{Y_n\}_{n \geq 1}$
converges in distribution to a Gamma distributed random variable with parameter
$\nu$. 
\item As $n$ tends to infinity, it holds that
  \begin{equation}
    \label{eq:18}
      m_4(X_n) - 6m_3(X_n)
   + 6 \left( 1 - \nu \right) m_2(X_n)
   + 3\nu^2 \to 0.
  \end{equation}
\end{enumerate}
\end{corollary}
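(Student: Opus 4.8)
The plan is to follow verbatim the two-step strategy used in the proof of Corollary~\ref{cor:Gauss-app}, replacing the Gaussian characterisation by the corresponding one for the Gamma law and invoking Theorem~\ref{Gamma-aprox} in place of Theorem~\ref{theorem:2}. As there, the two implications are proved separately, the forward one by a soft uniform-integrability argument and the backward one by deriving and solving a first order ODE for the characteristic function of every subsequential limit.

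For the implication $(i) \Rightarrow (ii)$ I would argue exactly as in the Gaussian case. Since the eigenspaces are hypercontractive and $\{X_n\}$ is bounded in $L^2(E,\mu)$, it is bounded in every $L^r(E,\mu)$; in particular $\{|Y_n|^4\}$ is uniformly integrable. Convergence in law of $Y_n = X_n + \nu$ to a Gamma distributed variable $Z$ with parameter $\nu$ then upgrades to convergence of the first four moments, so that $m_k(X_n) \to m_k(Z - \nu)$ for $1 \leq k \leq 4$. It remains to check the purely computational fact that the combination in~\eqref{eq:18} vanishes on the limit. Inserting the central moments of the Gamma law, namely $m_2 = \nu$, $m_3 = 2\nu$ and $m_4 = 3\nu^2 + 6\nu$, into $m_4 - 6m_3 + 6(1-\nu)m_2 + 3\nu^2$ gives identically zero, so the left-hand side of~\eqref{eq:18} converges to zero.

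For the converse $(ii) \Rightarrow (i)$ the starting point is Theorem~\ref{Gamma-aprox}, which shows that~\eqref{eq:18} forces $\Gamma(Y_n) - \lambda_p Y_n \to 0$ in $L^2(E,\mu)$. The $L^2$-boundedness of $\{X_n\}$ yields tightness of $\{Y_n\}$, so it suffices to identify every subsequential limit $\rho$ as the Gamma law with parameter $\nu$. Testing against $\me^{\mi \xi Y_n}$ and using the integration by parts formula~\eqref{eq:19}, the derivation property of $\Gamma$ and the identity $\LL Y_n = -\lambda_p (Y_n - \nu)$, one obtains
\begin{equation*}
  \mi\xi \int_E Y_n\, \me^{\mi\xi Y_n} \diff{\mu}
  \approx
  \int_E (Y_n - \nu)\, \me^{\mi\xi Y_n} \diff{\mu},
\end{equation*}
where the error is bounded by $\| \Gamma(Y_n) - \lambda_p Y_n \|_{L^2}$ via Cauchy--Schwarz (note $|\me^{\mi\xi Y_n}| = 1$). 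Writing $\hat\rho$ for the characteristic function of the subsequential limit and passing to the limit, using uniform integrability of $Y_n$ to identify $\E[Y_n \me^{\mi\xi Y_n}]$ with $-\mi\,\hat\rho'(\xi)$, this becomes the linear ODE $(\xi + \mi)\,\hat\rho'(\xi) = -\nu\, \hat\rho(\xi)$, whose unique solution with $\hat\rho(0) = 1$ is $\hat\rho(\xi) = (1 - \mi\xi)^{-\nu}$, the characteristic function of the Gamma law with parameter $\nu$. As the subsequence was arbitrary, the whole sequence $Y_n$ converges in law to this distribution.

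The routine ingredients (hypercontractivity, uniform integrability, and the moment bookkeeping) carry over from Corollary~\ref{cor:Gauss-app}. The one genuinely new element is the Stein-type characterisation of the Gamma law encoded in the ODE above, and I expect the main technical point to be the justification of the limit exchange in the displayed approximation: one must verify that the $L^2$-smallness of $\Gamma(Y_n) - \lambda_p Y_n$, together with tightness and the uniform integrability of $\{Y_n\}$, legitimately transfers to the bounded test functions $\me^{\mi\xi Y_n}$ and passes cleanly to $\hat\rho$ and $\hat\rho'$, so that the approximate identity yields the exact ODE for the characteristic function of the limit.
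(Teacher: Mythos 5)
Your proposal is correct and follows essentially the same route as the paper, whose entire proof of this corollary is the single sentence that it ``can be given analogously to Corollary~\ref{cor:Gauss-app}''; your write-up is exactly that analogous argument carried out in detail (hypercontractivity plus moment convergence for $(i)\Rightarrow(ii)$, and Theorem~\ref{Gamma-aprox} plus the characteristic-function ODE for $(ii)\Rightarrow(i)$). Your computations check out: the centered Gamma moments $m_2=\nu$, $m_3=2\nu$, $m_4=3\nu^2+6\nu$ annihilate the combination in~\eqref{eq:18}, and the ODE $(\xi+\mi)\hat\rho'(\xi)=-\nu\hat\rho(\xi)$ with $\hat\rho(0)=1$ indeed has the unique solution $(1-\mi\xi)^{-\nu}$, the Gamma characteristic function.
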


\begin{proof}
  The proof can be given analogously to Corollary~\ref{cor:Gauss-app}.
\end{proof}

\begin{remark} \hfill
  \begin{enumerate}[(i)]
  \item If we additionally assume that $m_2(X_n) = \nu$, the moment
    condition~\eqref{eq:18} can be replaced by $m_{4}(X_n) - 6 m_{3}(X_n) -3
    \nu^{2} + 6\nu \to 0$.
  \item Using Stein's method, it is possible to show that the $L^2(E,\mu)$-convergence
    of $\Gamma(X_n+\nu) - \lambda_p (X_n+\nu)$ implies convergence in stronger
    distances. We refer to~\cite[page. 9]{ledoux_chaos_2012} and~\cite{nourdin_noncentral_2009} for details.
  \end{enumerate}
\end{remark}

\subsection{Beta Approximation}
\label{s-sect-this-sect}

\begin{theorem}
  \label{thm:1}
  Let $X$ be a chaos eigenfunction of order $p$ with respect to $-\LL$ and set
  $Y = X + \frac{\alpha}{\alpha+\beta}$, where $\alpha,\beta > 0$.
  Then, if
  \begin{equation}
    \label{eq:1}
    \lambda_{2p} \leq 2\lambda_p \frac{\alpha+\beta+1}{\alpha+\beta},
  \end{equation}
  it holds that
  \begin{multline}
    \label{eq:2}
    \int_E^{} \left( \Gamma(Y) - \frac{\lambda_p}{\alpha+\beta} Y \left( 1-Y \right)\right) \diff{\mu}
    \\ \leq
    \frac{2 (\alpha+\beta+1)\lambda_p^2}{3(\alpha+\beta)}
    \left(
      m_4(X)
      +
      \frac{3(\alpha+1)}{\alpha+\beta+2} m_3(X)
      +
      3 (\alpha+1)^2 m_2(X)
      -
      (\alpha+\beta)
      \left( \frac{\alpha+1}{\alpha+\beta+2} \right)^3
      \right)
  \end{multline}

\end{theorem}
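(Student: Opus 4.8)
The plan is to specialize, to the Beta target, the three-step route laid out just before Lemma~\ref{lem:2}, in exact parallel with the proofs of Theorem~\ref{theorem:2} and Theorem~\ref{Gamma-aprox}.

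First I would produce the $\LL$-expression announced in Table~\ref{tab:targets}, i.e. verify the algebraic identity
\begin{equation*}
  \Gamma(Y) - \frac{\lambda_p}{\alpha+\beta} Y(1-Y)
  =
  \left( \LL + 2\lambda_p \frac{\alpha+\beta+1}{\alpha+\beta} \Id \right)
  P_2^{(\alpha-1,\beta-1)}(1-2Y),
\end{equation*}
where $P_2^{(\alpha-1,\beta-1)}$ is the second Jacobi polynomial, that is the degree-two orthogonal polynomial for the Beta$(\alpha,\beta)$ law. To check it I would use the two defining identities $\LL Y = \LL X = -\lambda_p X = -\lambda_p\big(Y - \tfrac{\alpha}{\alpha+\beta}\big)$ and $2\Gamma(Y) = 2\Gamma(X) = (\LL + 2\lambda_p\Id)X^2$, substitute the explicit quadratic $P_2^{(\alpha-1,\beta-1)}(1-2Y)$, and expand both sides as a quadratic in $X$, matching coefficients. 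Since $P_2^{(\alpha-1,\beta-1)}(1-2y)$ is, up to normalization, characterized by being the degree-two polynomial orthogonal to $1$ and $y$ against the Beta weight, the identity is essentially forced; this is the exact analogue of $\Gamma(X)-\lambda_p = (\LL+2\lambda_p\Id)\tfrac12 H_2(X)$ and $\Gamma(Y)-\lambda_p Y = (\LL+2\lambda_p\Id)L_2^{(\nu-1)}(Y)$ used in the earlier two cases.

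Second, write $Q(x)$ for the quadratic polynomial in the single variable $x$ with $Q(X) = P_2^{(\alpha-1,\beta-1)}(1-2Y)$, and set $\eta = 2\lambda_p\frac{\alpha+\beta+1}{\alpha+\beta}$. I would square the identity, integrate against $\mu$, and use the symmetry of $\LL + \eta\Id$ to rewrite the left-hand side as $\int_E Q(X)(\LL+\eta\Id)^2 Q(X)\,\diff{\mu}$. Because $X$ is a chaos eigenfunction of order $p$, the quadratic $Q(X)$ lies in $\bigoplus_{k=0}^{2p}\K(\LL+\lambda_k\Id)$, and hypothesis~\eqref{eq:1} is precisely the statement $\eta \geq \lambda_{2p}$. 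Theorem~\ref{General Principle} then applies and lets me drop one factor of $(\LL+\eta\Id)$, yielding
\begin{equation*}
  \int_E \left( \Gamma(Y) - \frac{\lambda_p}{\alpha+\beta}Y(1-Y) \right)^2 \diff{\mu}
  \leq
  \eta \int_E Q(X)(\LL+\eta\Id)Q(X)\,\diff{\mu}.
\end{equation*}

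Finally, I would evaluate the right-hand integral with Lemma~\ref{lem:2}, taking $a = \eta/\lambda_p = 2\frac{\alpha+\beta+1}{\alpha+\beta}$, which turns it into $\lambda_p\int_E\big(aQ^2(X) - (Q'(X))^3 X/(3Q''(X))\big)\,\diff{\mu}$. As $Q$ has degree two, the integrand is a polynomial of degree at most four in $X$, so after expansion the bound is a linear combination of $m_1(X),\dots,m_4(X)$; using $m_1(X)=\int_E X\,\diff{\mu}=0$ (which follows from $\int_E\LL X\,\diff{\mu}=0$ and $\lambda_p>0$) collapses it into the stated combination of $m_2,m_3,m_4$ and a constant. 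I expect the only genuinely laborious part to be this last bookkeeping: unlike the Gaussian and Gamma cases, the Jacobi coefficients depend on both $\alpha$ and $\beta$, so combining the factor $\eta$ from Theorem~\ref{General Principle} with the $\lambda_p$ from Lemma~\ref{lem:2} to produce the prefactor $\tfrac{2(\alpha+\beta+1)\lambda_p^2}{3(\alpha+\beta)}$, together with the exact coefficients $\tfrac{3(\alpha+1)}{\alpha+\beta+2}$, $3(\alpha+1)^2$ and the constant $(\alpha+\beta)\big(\tfrac{\alpha+1}{\alpha+\beta+2}\big)^3$, requires carrying the normalization of $P_2^{(\alpha-1,\beta-1)}$ carefully through the first and last steps. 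The conceptual content, by contrast, is entirely supplied by Theorem~\ref{General Principle} and Lemma~\ref{lem:2}.
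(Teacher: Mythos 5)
Your proposal is correct and follows the paper's proof step for step: the same Jacobi-polynomial identity, the same application of Theorem~\ref{General Principle} with $\eta = 2\lambda_p\tfrac{\alpha+\beta+1}{\alpha+\beta}$, and the same final evaluation via Lemma~\ref{lem:2}. The only discrepancy is that the identity as you display it omits the normalization factor $\tfrac{1}{(\alpha+\beta+1)(\alpha+\beta+2)}$ in front of $\left(\LL + 2\lambda_p\tfrac{\alpha+\beta+1}{\alpha+\beta}\Id\right)P_2^{(\alpha-1,\beta-1)}(1-2Y)$ (the paper's proof includes it, and its square combined with $\eta$ is exactly what produces the constant $c_{\alpha,\beta}$ there); your coefficient-matching would catch this, and it is precisely the normalization bookkeeping you flagged at the end.
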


\begin{proof}
  It is straightforward to verify that
  \begin{equation*}
    \Gamma(Y) - \frac{\lambda_p}{\alpha+\beta} Y \left( 1- Y \right) 
    =
    \frac{1}{(\alpha+\beta+1)(\alpha+\beta+2)} (\LL+2\lambda_p \frac{\alpha+\beta+1}{\alpha+\beta}\Id) P_2^{\alpha-1,\beta-1}(1-2Y),
  \end{equation*}  
  where $P_2^{\alpha,\beta}(x)$
  denotes the second Jacobi-polynomial with parameters $\alpha$ and
  $\beta$. Under assumption~\eqref{eq:1}, we can apply Theorem~\ref{General
    Principle} to infer that
  \begin{multline*}
    \int_E^{} \left(
      \Gamma(Y) - \frac{\lambda_p}{\alpha+\beta} Y \left( 1 - Y \right)
    \right)^2 \diff{\mu}
    \\ \leq 2 \lambda_p c_{\alpha,\beta}
    \int_E^{}
    P_2^{(\alpha-1,\beta-1)}(1-2Y) (\LL+2\lambda_p \frac{\alpha+\beta+1}{\alpha+\beta}\Id)
    P_2^{(\alpha-1,\beta-1)}(1-2Y) \diff{\mu},
  \end{multline*}
  where
  \begin{equation*}
    c_{\alpha,\beta} =
    \frac{1}{(\alpha+\beta)(\alpha+\beta+1)(\alpha+\beta+2)^2}.
  \end{equation*}

  The asserted moment expression on the right hand side of~\eqref{eq:2} are
  obtained 
  after an
  application of Lemma~\ref{lem:2} and some tedious calculations.
\end{proof}

\begin{corollary}[Fourth Moment Theorem for Beta Approximation]
  \label{cor:1}
  Let $\{X_n\}_{n \geq 1}$ be a sequence of chaos eigenfunctions of order $p$ with
  respect to the operator $- \LL$, bounded in $L^2(E,\mu)$ and set $Y_n = X_n +
  \frac{\alpha}{\alpha+\beta}$ where $\alpha,\beta >0$. Then, if $\lambda_{2p}
  \leq 2\lambda_p 
  \frac{\alpha+\beta+1}{\alpha+\beta}$, the following two assertions are equivalent.
  \begin{enumerate}[(i)]
  \item As $n$ tends to infinity, the sequence $\{Y_n \}_{n \geq 1}$
    converges in     distribution to a Beta distribution with parameters $\alpha$ and 
    $\beta$.
  \item As $n$ tends to infinity, it holds that
    \begin{equation*}
           m_4(X_n)
      +
      \frac{3(\alpha+1)}{\alpha+\beta+2} m_3(X_n)
      +
      3 (\alpha+1)^2 m_2(X_n)
      -
      (\alpha+\beta)
      \left( \frac{\alpha+1}{\alpha+\beta+2} \right)^3
      \to 0
    \end{equation*}
  \end{enumerate}
\end{corollary}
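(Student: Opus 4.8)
The plan is to follow the same two-step scheme as in Corollary~\ref{cor:Gauss-app}, now feeding in the inequality of Theorem~\ref{thm:1} in place of that of Theorem~\ref{theorem:2}. Since the genuinely analytic work, the moment bound, is already done, both implications should reduce to soft arguments built on hypercontractivity and the integration by parts formula~\eqref{eq:19}.

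For $(i)\Rightarrow(ii)$ I would first use hypercontractivity of the eigenspaces to promote the assumed $L^2(E,\mu)$-boundedness of $\{X_n\}$ to boundedness in every $L^r(E,\mu)$, $r\geq 1$. This makes the family $\{X_n\}$, together with all of its powers, uniformly integrable, so that the distributional convergence $Y_n \to \text{Beta}(\alpha,\beta)$ upgrades to convergence of each moment $m_k(X_n)$, $1\leq k\leq 4$, to the corresponding moment of the centered law $Y - \tfrac{\alpha}{\alpha+\beta}$. Substituting these limiting values into the polynomial appearing in (ii) and using the explicit Beta moments $\mu_m = (\alpha)_m/(\alpha+\beta)_m$, a direct (if slightly tedious) computation shows that the expression vanishes in the limit.

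For $(ii)\Rightarrow(i)$ I would invoke Theorem~\ref{thm:1} to conclude that (ii) forces the $L^2(E,\mu)$-convergence of $\Gamma(Y_n) - \tfrac{\lambda_p}{\alpha+\beta} Y_n(1-Y_n)$ to zero. Combining the integration by parts formula~\eqref{eq:19}, the derivation property of $\Gamma$, and the eigenrelation $\LL Y_n = -\lambda_p\bigl(Y_n - \tfrac{\alpha}{\alpha+\beta}\bigr)$, for any polynomial $\phi$ I obtain
\[
  \lambda_p \int_E^{} \phi(Y_n)\left( Y_n - \frac{\alpha}{\alpha+\beta} \right)\diff{\mu}
  = \int_E^{} \phi'(Y_n)\,\Gamma(Y_n)\diff{\mu}.
\]
Replacing $\Gamma(Y_n)$ by $\tfrac{\lambda_p}{\alpha+\beta} Y_n(1-Y_n)$ up to an error that is negligible by Cauchy--Schwarz together with the uniform $L^r$-bounds on $\phi'(Y_n)$, and passing to any subsequential weak limit $\rho$ of the laws of $Y_n$---which exists by the tightness furnished by those same $L^r$-bounds---yields
\[
  \int_{\R} \phi'(y)\, y(1-y)\, \rho(\diff{y})
  = \int_{\R} \bigl( (\alpha+\beta) y - \alpha \bigr)\phi(y)\, \rho(\diff{y})
\]
for every polynomial $\phi$. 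This is exactly the stationarity equation of the Jacobi diffusion whose invariant law is $\text{Beta}(\alpha,\beta)$.

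To close, I would specialize to $\phi(y) = y^m$, which turns the last identity into the recursion $(m+\alpha)\mu_m = (m+\alpha+\beta)\mu_{m+1}$ for the moments $\mu_m = \int_{\R} y^m\,\rho(\diff{y})$; with $\mu_0 = 1$ this determines $\mu_m = (\alpha)_m/(\alpha+\beta)_m$, the moment sequence of $\text{Beta}(\alpha,\beta)$. Since this law is compactly supported and hence determined by its moments, every subsequential limit $\rho$ equals $\text{Beta}(\alpha,\beta)$, and therefore the full sequence $\{Y_n\}$ converges in distribution to it. The hard part is entirely encapsulated in Theorem~\ref{thm:1}; the only points requiring care are the justification of the limit exchange in the integration by parts identity and the determinacy argument that pins down $\rho$ uniquely.
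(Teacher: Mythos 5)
Your argument is correct, and its first implication coincides with the paper's: for (i)$\Rightarrow$(ii) the paper also argues exactly as in Corollary~\ref{cor:Gauss-app}, using hypercontractivity to upgrade $L^2$-boundedness to all $L^r$ and then passing moments to the limit. For (ii)$\Rightarrow$(i) you share the paper's scaffolding --- Theorem~\ref{thm:1} forces $\Gamma(Y_n)-\tfrac{\lambda_p}{\alpha+\beta}Y_n(1-Y_n)\to 0$ in $L^2(E,\mu)$, which is then fed into the integration by parts formula~\eqref{eq:19} and passed to subsequential weak limits --- but you identify the limit law by a genuinely different device. The paper tests against the exponentials $\me^{\mi\xi Y_n}$, so that the Fourier transform of any subsequential limit satisfies a second-order ODE (a version of Kummer's equation), whose solution with the prescribed values at $\xi=0$ is Kummer's function $M(\alpha,\alpha+\beta,\mi\xi)$, the characteristic function of the Beta law. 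You test against polynomials $\phi(y)=y^m$ instead, turning the same approximate identity into the first-order recursion $(m+\alpha)\mu_m=(m+\alpha+\beta)\mu_{m+1}$, hence $\mu_m=(\alpha)_m/(\alpha+\beta)_m$, and you close with moment determinacy of the compactly supported Beta law. Your route is more elementary: no special functions are needed, and uniqueness is trivial for a first-order recursion, whereas uniqueness for the paper's second-order ODE (which is singular at $\xi=0$) deserves more care than the paper gives it; the price is the (standard) appeal to determinacy of the moment problem, plus uniform integrability to pass polynomial test functions to the limit --- but the latter is exactly what hypercontractivity already supplies. The paper's route, in exchange, is uniform across the Gaussian, Gamma and Beta targets and exhibits the limiting characteristic function explicitly.

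One caveat, which concerns the printed statement rather than your method: in (i)$\Rightarrow$(ii) you assert that substituting the centered Beta moments into the displayed expression gives zero. With the coefficients as printed (here and in~\eqref{eq:2} of Theorem~\ref{thm:1}, whose left-hand side is moreover missing its square) this is false: for $\alpha=\beta=\tfrac12$ (arcsine law) one has $m_2=\tfrac18$, $m_3=0$, $m_4=\tfrac{3}{128}$, and the expression evaluates to $\tfrac{3}{128}+\tfrac{27}{32}-\tfrac18=\tfrac{95}{128}\neq 0$. The expression genuinely produced by Lemma~\ref{lem:2} must vanish at the Beta law: take $d=1$ and $Y$ the identity coordinate under the one-dimensional Jacobi generator, for which $\Gamma(Y)-\tfrac{\lambda_1}{\alpha+\beta}Y(1-Y)\equiv 0$ and $\bigl(\LL+2\lambda_1\tfrac{\alpha+\beta+1}{\alpha+\beta}\Id\bigr)P_2^{(\alpha-1,\beta-1)}(1-2Y)=0$, since $2\lambda_1\tfrac{\alpha+\beta+1}{\alpha+\beta}=\lambda_2$. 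So the printed coefficients contain typos, and both your proof and the paper's are sound only once condition (ii) is read as the correct output of that "tedious calculation".
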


\begin{proof}
  The proof can be done as in Corollary~\ref{cor:Gauss-app}. What is slightly
  more involved is deriving the 
  characteristic function $\hat{\rho}$ of the weak limit of $\{Y_n\}_{n \geq
    1}$: Using that $\Gamma(Y_n)
  \overset{L^2}{\approx} 
 \frac{\lambda_p}{\alpha+\beta}\lambda_p Y_n (1-Y_n)$ and thus
 \begin{equation*}
   \int_E^{} \mi \xi \me^{\mi \xi Y_n} \Gamma(Y_n) \diff{\mu}
   \approx
   \int_E^{}
   \frac{\lambda_p}{(\alpha+\beta)}
   \mi \xi \me^{\mi \xi Y_n} Y_n (1-Y_n) \diff{\mu},
 \end{equation*}
 we can infer by integration by parts that $\hat{\rho}$ solves the differential
 equation
 \begin{equation*}
      \mi \xi \frac{\mathrm{d}^2}{\diff{\xi^2}} \phi''(\mi \xi) + (\alpha+\beta
      - \mi \xi) \frac{\mathrm{d}}{\diff{\xi}} \phi(\mi \xi) - \alpha \phi(\mi
      \xi) 
      = 0, 
    \end{equation*}
    which is a version of Kummer's equation. The only solution satisfying
    $\phi(0)=\hat{\rho}(0)=1$ and $\frac{\mathrm{d}}{\diff{\xi}} \phi (0) =
    \frac{\mathrm{d}}{\diff{\xi}} \hat{\rho}(0) = \mi
    \frac{\alpha}{\alpha+\beta}$ is
    \begin{equation*}
      \phi(\xi) = M \left( \alpha,\alpha+\beta,\mi \xi \right).
    \end{equation*}
    Here, $M$ denotes Kummer's confluent hypergeometric function, which is
    well-known to be the characteristic function of a Beta distribution
    with parameters $\alpha$ and $\beta$.
\end{proof}

\begin{remark}
  As is the case in the Gaussian and Gamma approximation, one can apply Stein's 
  method in the spirit of~\cite{ledoux_chaos_2012} to prove 
  convergence in Wasserstein distance and derive precise bounds.
\end{remark}


\section{Applications}\label{Appli} In this section, we give concrete examples to how our
main results can be applied to the Ornstein-Uhlenbeck, Laguerre and Jacobi
generators. To be more precise,  we prove that in  
the Wiener/Laguerre/Jacobi diffusion structures, our definition of chaos is
\textbf{always} satisfied in the eigenspaces and that the assumptions of the
Fourth Moment Theorems of the previous sections are valid (in the Jacobi case
under some paramater condition).  
 
\subsection{Wiener Structure}

For $d\geq 1$, denote by $\mu_d$ the 
$d$-dimensional standard Gaussian measure on $\R^d$. It is well known (see for
example~\cite{bakry_gentil_ledoux_2013}), that
$\mu_d$ is the invariant measure of the Ornstein-Uhlenbeck generator, defined
for any test function $\phi$ by 
\begin{equation}\label{O-U} \LL \phi(x)
=\Delta\phi-\sum_{i=1}^{d}x_i \partial_i \phi(x).
\end{equation} Its spectrum is given by $- \N$ and the eigenspaces are of the
form 
\begin{equation*} \textbf{Ker}(\LL+k\Id)=
\left\{\sum_{i_1+i_2+\cdots+i_{d}=k}\alpha(i_1,\cdots,i_{d})\prod_{j=1}^{d}
H_{i_j}(x_j)\right\},
\end{equation*} where $H_n$ denotes the Hermite polynomial of order 
$n$. 
Any eigenfunction $X$ is thus chaotic in the sense of
Definition~\ref{chaos}. Assume now that $X$ is an eigenfunction of 
$\LL$ with eigenvalue $-\lambda_p=-p$. In particular, $X$ is a
multivariate polynomial of degree $p$. Hence $X^2$ is a multivariate
polynomial of degree $2p$. Note that, by expanding $X^2$ over the basis of
multivariate Hermite polynomials $\prod_{j=1}^{d} H_{i_j}(x_j), i_{j}\ge 0$, we
obtain that $X^2$ has a finite expansion over the first eigenspaces of the
generator $\LL$, i.e.
\begin{equation*} X^2 \in \bigoplus_{k=0}^{M} \K(\LL + k
\Id).
\end{equation*}
For degree reasons, we infer that $M=2p$. As a result one 
can see that Theorem~\ref{General Principle} is applicable and thus the
finite-dimensional version of the celebrated Fourth Moment 
Theorem from~\cite{nualart_central_2005} is a consequence of
Corollary~\ref{cor:Gauss-app}. 

\subsection{Laguerre Structure}
\label{s-let-p-geq}

Let $\nu \geq -1$, and $ \pi_{1,\nu}(\diff{x}) =
x^{\nu-1}\frac{\me^{-x}}{\Gamma(\nu)} \textbf{1}_{(0,\infty)}(x) \ud x$ be the Gamma
distribution with parameter $\nu$ on $\R_+$. The associated Laguerre generator is
defined for any test function $\phi$ (in dimension one) by: 
\begin{equation}\label{Lag1} \LL_{1,\nu} \phi(x)= x\phi''(x)+(\nu+1-x)\phi'(x).
\end{equation}
By a classical tensorization procedure, we obtain the Laguerre generator in
dimension $d$ associated to the measure $ \pi_{d,\nu}(\ud x) = \pi_{1,\nu}(\ud x_1) 
\pi_{1,\nu}(\ud x_2) \cdots \pi_{1,\nu}(\ud x_d)$, where $x=(x_1,x_2, \cdots,x_d)$.
\begin{equation}\label{Lag2} \LL_{d,\nu}\phi(x) = \sum_{i=1}^{d}
\Big{(}x_{i} \partial_{i,i}\phi+(\nu+1-x_i)\partial_i \phi\Big{)}
\end{equation}

It is well known that (see for example~\cite{bakry_gentil_ledoux_2013}) that the spectrum of
$\LL_{d,\nu}$ is given by~$-\N$ and moreover that
\begin{equation} \textbf{Ker}(\LL_{d,\nu} + p\Id) =
\left\{\sum_{i_1+i_2+\cdots+i_{d}=p} \alpha(i_1,\cdots,i_{d})\prod_{j=1}^{d}
L^{(\nu)}_{i_j}(x_j)\right\},
\end{equation}
where $L^{(\nu)}_n$ stands for the Laguerre polynomial of order $n$ with parameter $\nu$ which is defined by
$$ L_n^{(\nu)}(x)= {x^{-\nu} \me^x \over n!}{d^n \over dx^n} \left(\me^{-x} x^{n+\nu}\right).$$
Again, we have the following decomposition:
\begin{equation}\label{decompo2} L^2(\R^d,\pi_{d,\nu})=\bigoplus_{p=0}^\infty
\textbf{Ker}(\LL_{d,\nu} + p \Id)
\end{equation}

Similarly in this framework, we see that any eigenfunction $X$ is a chaotic in
the sense of Definition \ref{chaos}. Assume now that $X$ is an eigenfunction of
$\LL_{d,\nu}$ with eigenvalue $-\lambda_p=-p$. In particular, $X$ is a
multivariate polynomial of degree $p$. Therefore,  $X^2$ is a multivariate
polynomial of degree $2p$. Note that by expanding $X^2$ over the basis of
multivariate Laguerre polynomials $\prod_{j=1}^{d} L^{(\nu)}_{i_j}(x_j), i_{j}\ge
0$, we get that $X^2$ has a finite expansion over the first eigenspaces of the
generator $\LL_{d,\nu}$, i.e. 
\begin{equation*} X^2 \in \bigoplus_{p=0}^{M} \K(\LL_{d,\nu}+ p \Id).
\end{equation*} Again, for degree reasons we infer that $M=2p$ and thus
Theorem~\ref{General Principle} is applicable.

\subsection{Beta Structure}

For $\alpha,\beta > -1$, let $\gamma_{\alpha,\beta}(\diff{x}) =
\frac{\Gamma(\alpha+\beta)}{\Gamma(\alpha)\Gamma(\beta)} x^{\alpha-1}
(1-x)^{\beta-1} \textbf{1}_{[0,1]}\diff{x}$ be the beta distribution and choose a
dimension $d \geq 1$. Then, the generator 
$\LL_{\alpha,\beta}$ associated to the measure $\gamma_{d,\alpha,\beta} := \gamma_{\alpha,\beta} \left(
  \diff{x_1} \right) \cdots \gamma_{\alpha,\beta} \left( \diff{x_d} \right)$ is
given by 
\begin{equation*}
  \LL_{\alpha,\beta} \phi(x)
  =
  \LL_{\alpha,\beta} \phi(x_1,\ldots,x_d)
  = \left( \sum_{i=1}^d
    \left(
    x_i(1-x_i) \partial_{ii}^2 + \left(
    \alpha - \left( \alpha+\beta \right)x_i \right) \partial_i
  \right) \right) \phi(x)
\end{equation*}
and its spectrum $S$ is of the form
\begin{equation}
  \label{eq:4}
  S = \left\{ \lambda_{i_1} + \ldots + \lambda_{i_d} \mid  i_j \geq 0, j = 1,\ldots,d \right\},
\end{equation}
where, here and throughout the rest of this section, $\lambda_k = k (k + \alpha+\beta-1)$.
Again, it holds that $L^2(\gamma_{d,\alpha,\beta}) = \bigoplus_{\lambda \in S}
\K \left( \LL_{\alpha,\beta} + \lambda \Id \right)$ and the kernels are given by
\begin{equation}
  \label{eq:5}
  \K \left( \LL_{\alpha,\beta} + \lambda \Id \right)
  =
  \left\{
    \sum_{\substack{i_1,\ldots,i_d \geq 0 \\ \lambda_{i_1} + \ldots +
        \lambda_{i_d} = \lambda }}^{} a(i_1,\ldots,i_d)
    P_{i_1}^{(\alpha-1,\beta-1)}(1-2x_1) \cdots
    P_{i_d}^{(\alpha-1,\beta-1)}(1-2x_d) 
  \right\},
\end{equation}
where $P_n^{(\alpha,\beta)}(x)$ denotes the $n$th Jacobi polynomial, given by
\begin{equation*}
  P_n^{(\alpha,\beta)}(x) = \frac{(-1)^n}{2^n n!} (1-x)^{-\alpha}(1+x)^{\beta}
  \frac{\mathrm{d}^n}{\diff{x^n}} \left( (1-x)^{\alpha}(1+x)^{\beta}(1-x^2)^n \right).
\end{equation*}

We need the following technical Lemma.
\begin{lemma}
  \label{lem:1}
  Let $\lambda \in S$ and
  \begin{equation}
    \label{eq:7}
        X = \sum_{\substack{i_1,\ldots,i_d \geq 0 \\ \lambda_{i_1} + \ldots +
        \lambda_{i_d} = \lambda }}^{} a(i_1,\ldots,i_d)
    P_{i_1}^{(\alpha-1,\beta-1)}(1-2x_1) \cdots P_{i_d}^{(\alpha-1,\beta-1)}(1-2x_d)
    \in \K \left( \LL_{\alpha,\beta} + \lambda \Id
  \right)
  \end{equation}
  Then it holds that
  \begin{equation*}
    X^2 \in \bigoplus_{\eta \leq M} \K \left( \LL_{\alpha,\beta} + \eta \Id \right)
  \end{equation*}
  where
  \begin{equation}
    \label{eq:6}
    M = \max_{\substack{i_1,\ldots,i_d \geq 0 \\ \lambda_{i_1} + \ldots +
        \lambda_{i_d} = \lambda}} 
      \lambda_{2i_1} + \lambda_{2i_2} + \ldots + \lambda_{2i_d}  
  \end{equation}
\end{lemma}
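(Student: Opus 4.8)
The plan is to reduce the statement to the one-dimensional linearization of products of Jacobi polynomials and then exploit the \emph{convexity} of the map $k \mapsto \lambda_k = k(k+\alpha+\beta-1)$, which is the feature distinguishing the Jacobi spectrum from the linear Wiener and Laguerre spectra. First I would observe that, being a finite linear combination of the products $P_{\mathbf i} := \prod_{j=1}^d P_{i_j}^{(\alpha-1,\beta-1)}(1-2x_j)$ (with $\mathbf i = (i_1,\ldots,i_d)$), the eigenfunction $X$ is a polynomial in $x_1,\ldots,x_d$, and hence so is $X^2$. Writing $X = \sum_{\mathbf i} a(\mathbf i) P_{\mathbf i}$, the sum running over multi-indices subject to $\lambda_{i_1}+\cdots+\lambda_{i_d} = \lambda$, I would expand
\begin{equation*}
  X^2 = \sum_{\mathbf i, \mathbf i'} a(\mathbf i) a(\mathbf i') \, P_{\mathbf i} P_{\mathbf i'},
\end{equation*}
so that it suffices to control the eigenspace components of each individual product $P_{\mathbf i} P_{\mathbf i'}$, where both $\mathbf i$ and $\mathbf i'$ satisfy $\lambda_{\mathbf i} = \lambda_{\mathbf i'} = \lambda$.

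Next, by the tensor structure of the measure and of the generator, each product factorizes as $P_{\mathbf i} P_{\mathbf i'} = \prod_{j=1}^d \big( P_{i_j}^{(\alpha-1,\beta-1)}(1-2x_j) \, P_{i_j'}^{(\alpha-1,\beta-1)}(1-2x_j) \big)$. In one variable the product $P_{i_j}^{(\alpha-1,\beta-1)} P_{i_j'}^{(\alpha-1,\beta-1)}$ is a polynomial of degree $i_j + i_j'$, and since the one-dimensional Jacobi polynomials form an orthogonal basis in which any polynomial of degree $n$ involves only $P_0,\ldots,P_n$, it admits a linearization $\sum_{m_j=0}^{i_j+i_j'} c_{m_j} P_{m_j}^{(\alpha-1,\beta-1)}$. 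Multiplying these out shows that $X^2$ is a finite linear combination of products $P_{\mathbf m}$ with $m_j \le i_j + i_j'$ for each $j$, and each such $P_{\mathbf m}$ lies in $\K(\LL_{\alpha,\beta} + (\lambda_{m_1}+\cdots+\lambda_{m_d})\Id)$. Thus every eigenspace occurring in the expansion of $X^2$ has eigenvalue $\sum_j \lambda_{m_j}$ with $m_j \le i_j + i_j'$; crucially, only the upper degree bound, not the linearization coefficients, is needed.

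It remains to bound these eigenvalues by $M$, which is the heart of the matter. Since $k \mapsto \lambda_k$ is increasing, $\sum_j \lambda_{m_j} \le \sum_j \lambda_{i_j + i_j'}$. The key inequality is the convexity estimate $\lambda_{a+b} \le \tfrac12(\lambda_{2a}+\lambda_{2b})$ for nonnegative integers $a,b$, which holds because $\tfrac12(\lambda_{2a}+\lambda_{2b}) - \lambda_{a+b} = (a-b)^2 \ge 0$, reflecting that $\lambda_k = k(k+\alpha+\beta-1)$ is a convex function of $k$. Applying it coordinatewise yields
\begin{equation*}
  \sum_{j=1}^d \lambda_{i_j+i_j'} \le \frac12\left( \sum_{j=1}^d \lambda_{2i_j} + \sum_{j=1}^d \lambda_{2i_j'} \right) \le \frac12 (M + M) = M,
\end{equation*}
where the last step uses $\sum_j \lambda_{2i_j} \le M$ and $\sum_j \lambda_{2i_j'} \le M$, valid since both $\mathbf i$ and $\mathbf i'$ range over multi-indices with eigenvalue $\lambda$, i.e. over the competitors in the maximum defining $M$. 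Hence every eigenvalue appearing in $X^2$ is at most $M$, which is the assertion. I expect the convexity step to be the main obstacle conceptually: in the Wiener and Laguerre cases $\lambda_k$ is linear, so $\lambda_{a+b} = \lambda_a + \lambda_b$ and degree and eigenvalue bookkeeping coincide, whereas the quadratic growth of the Jacobi spectrum forces one to pass through the inequality above rather than a direct degree count.
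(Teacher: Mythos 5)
Your proof is correct, and at the decisive step it takes a genuinely different route from the paper's. Both arguments share the same skeleton: expand $X^2$ into cross products indexed by pairs of multi-indices $(i_1,\ldots,i_d)$, $(j_1,\ldots,j_d)$ occurring in $X$, factorize coordinatewise, use that the one-dimensional Jacobi polynomials $P_0^{(\alpha-1,\beta-1)},\ldots,P_n^{(\alpha-1,\beta-1)}$ span all polynomials of degree at most $n$, and then (by monotonicity of $k \mapsto \lambda_k$) reduce everything to bounding the top eigenvalue $\sum_{k=1}^d \lambda_{i_k+j_k}$ by $M$. The paper carries out this last bound by asserting that all multi-indices in the expansion of $X$ share the same total degree $m = \deg X$, deducing from the identity $\lambda_a+\lambda_b = \lambda_{a+b}-2ab$ that $\sum_{k\neq l} i_k i_l = \sum_{k\neq l} j_k j_l$ and $\sum_k i_k^2 = \sum_k j_k^2$, and finishing with a Cauchy--Schwarz argument giving $\sum_k \lambda_{i_k+j_k} \leq \sum_k \lambda_{2i_k} \leq M$. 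You bypass all of this bookkeeping with the single coordinatewise convexity inequality $\lambda_{a+b} \leq \tfrac{1}{2}\left(\lambda_{2a}+\lambda_{2b}\right)$ (whose proof is just $(a-b)^2 \geq 0$), followed by averaging the two bounds $\sum_k \lambda_{2i_k} \leq M$ and $\sum_k \lambda_{2j_k} \leq M$ supplied by the definition of $M$. This buys two things. First, brevity. Second, and more substantially, robustness: the paper's equal-degree claim is nowhere justified, and it can in fact fail, because for special parameter values the Jacobi spectrum has collisions across different total degrees --- for instance, for $\alpha+\beta = 2$ one has $\lambda_3+\lambda_0 = \lambda_2+\lambda_2 = 12$, so that in dimension $d=2$ the eigenfunction $P_3^{(0,0)}(1-2x_1) + P_2^{(0,0)}(1-2x_1)\,P_2^{(0,0)}(1-2x_2)$ mixes total degrees $3$ and $4$. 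Your argument never uses equality of degrees, so it covers such eigenfunctions, for which the paper's proof as written does not apply (the conclusion of the lemma being nevertheless true, as your proof shows). Nothing is lost by your averaging either: whenever the equal-degree property does hold, the relations above give $\sum_k \lambda_{2i_k} = \sum_k \lambda_{2j_k}$, so your bound and the paper's coincide.
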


\begin{proof}
  First note that for $a,b \in \N$ we have $\lambda_a + \lambda_b =
  \lambda_{a+b} - 2ab$. By induction, we thus get for $p \geq 2$ and 
  integers $a_1, \dots, a_p \in \N$ that
  \begin{equation}
    \label{eq:10}
    \lambda_{a_1} + \dots + \lambda_{a_p} =
    \lambda_{a_1 + a_2 + \dots + a_p}
    -
    \sum_{\substack{1 \leq k,l \leq p \\ k \neq l}} a_k a_l.
  \end{equation}
  Consequently, it holds for any index $(i_1,\ldots,i_d)$ occurring in the sum on
  the right hand side of~\eqref{eq:7} that $\lambda_{i_1} + \dots +
  \lambda_{i_d} = \lambda_m - \sum_{\substack{1 \leq k,l \leq d \\ k \neq l}}^{}
  i_k i_l$, where $m$ is the degree of $X$. If $(j_1,\dots,j_d)$ is another
  index from this sum, we thus have
  \begin{equation}
    \label{eq:8}
    \sum_{\substack{1 \leq k,l \leq d \\ k \neq l}}^{}
    i_k i_l
    =
    \sum_{\substack{1 \leq j,k \leq d \\ k \neq l}}^{}
    j_k j_l
  \end{equation}
  and, as $\left( \sum_{k=1}^d i_k \right)^2 -
  \left( \sum_{k=1}^d j_k \right)^2 = m^2 - m^2 = 0$,
  \begin{equation}
    \label{eq:9}
    \sum_{k=1}^d i_k^2 = \sum_{k=1}^d j_k^2.
  \end{equation}
  Now observe that the polynomials in the expansion of $X$ with maximum degree
  $2m$ are of the form
  \begin{equation*}
    \prod_{k=1}^d P_{i_k+j_k}^{(\alpha-1,\beta-1)}(1-2x_k),
  \end{equation*}
  corresponding to the eigenvalue
  \begin{equation}
    \label{eq:11}
    \sum_{k=1}^d \lambda_{i_k+j_k} = \lambda_{2m} - \sum_{k \neq l}^{}
    (i_k+j_k)(i_l + j_l),
  \end{equation}
  where we have used the identity~\eqref{eq:10}.
  Due to~\eqref{eq:8}, it holds that
\begin{equation*}
  \sum_{k \neq l}^{}
  (i_k+j_k)(i_l + j_l)
  =
  2  \left( \sum_{k \neq l}^{} i_k i_l
  +
    \sum_{k \neq l} i_k j_l \right)
\end{equation*}
and a straightforward application of the Cauchy-Schwarz inequality together
with~\eqref{eq:9} shows that
\begin{equation*}
  \sum_{\substack{1 \leq k \leq d \\ k \neq l}}^{} i_k j_l
  \geq
  \sum_{\substack{1 \leq k \leq d \\ k \neq l}}^{} i_k i_l.
\end{equation*}
Plugging this into~\eqref{eq:11} yields that $\sum_{k=1}^d \lambda_{i_k+j_k}
\leq \sum_{k=1}^d \lambda_{2i_k}$
and concludes the proof.
\end{proof}

This yields the following corollary.

\begin{corollary}
  \label{cor:2}
  Let $\lambda \in S$ and $X \in \K \left( \LL_{\alpha,\beta} + \lambda \Id
  \right)$. Then, condition~\eqref{eq:1} of Theorem~\ref{thm:1} is satisfied if
  and only if
  $\alpha+\beta \leq 1$.   
\end{corollary}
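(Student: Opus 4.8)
The plan is to reduce condition~\eqref{eq:1} to a single elementary identity for the eigenvalues $\lambda_k = k(k+\alpha+\beta-1)$. Writing $s=\alpha+\beta$ for brevity, I first note that for an eigenfunction $X\in\K(\LL_{\alpha,\beta}+\lambda\Id)$ as in~\eqref{eq:7}, Lemma~\ref{lem:1} identifies the largest eigenvalue $\lambda_{2p}$ occurring in the expansion of $X^2$ with the quantity $M$ from~\eqref{eq:6}. Thus condition~\eqref{eq:1} becomes the arithmetic inequality
\[
  M \le 2\lambda\,\frac{s+1}{s}.
\]
Since $M$ is a maximum of sums $\sum_{k=1}^d \lambda_{2i_k}$ over tuples $(i_1,\dots,i_d)$ subject to the fixed constraint $\sum_{k=1}^d \lambda_{i_k}=\lambda$, it is enough to compare $\lambda_{2i}$ and $\frac{2(s+1)}{s}\lambda_i$ one index at a time.

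The engine of the proof is the identity
\[
  \lambda_{2i} - \frac{2(s+1)}{s}\,\lambda_i = \frac{2(s-1)}{s}\,i(i-1),
\]
which I would check by expanding both sides with $\lambda_k=k^2+(s-1)k$. Summing it over any admissible tuple and using $\sum_k \lambda_{i_k}=\lambda$ gives
\[
  \sum_{k=1}^d \lambda_{2i_k} - 2\lambda\,\frac{s+1}{s} = \frac{2(s-1)}{s}\sum_{k=1}^d i_k(i_k-1).
\]
Here $s>0$ because $\alpha,\beta>0$, so the factor $2/s$ is positive, while each summand $i_k(i_k-1)$ is a nonnegative integer which vanishes exactly when $i_k\in\{0,1\}$. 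Hence the sign of the right-hand side is governed entirely by the sign of $s-1=\alpha+\beta-1$.

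Both implications now follow. If $\alpha+\beta\le 1$, then $s-1\le 0$, so for every admissible tuple $\sum_k\lambda_{2i_k}\le 2\lambda\frac{s+1}{s}$; taking the maximum yields $M\le 2\lambda\frac{s+1}{s}$, and~\eqref{eq:1} holds for every chaos eigenfunction. Conversely, if $\alpha+\beta>1$, then $s-1>0$, and any tuple containing an index $\ge 2$ makes the right-hand side strictly positive. Such an eigenfunction is readily produced: for $X=P_m^{(\alpha-1,\beta-1)}(1-2x_1)$ with $m\ge 2$ one has $\lambda=\lambda_m$, while $X^2$ has degree $2m$ and hence top eigenvalue $\lambda_{2m}$, so the identity gives $\lambda_{2m}-2\lambda_m\frac{s+1}{s}=\frac{2(s-1)}{s}m(m-1)>0$ and~\eqref{eq:1} fails. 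This establishes the equivalence.

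The step I expect to require the most care is the bookkeeping around condition~\eqref{eq:1} rather than the algebra: one must argue that the relevant $\lambda_{2p}$ is precisely the maximum $M$ delivered by Lemma~\ref{lem:1} (so that the equivalence is read over all chaos eigenfunctions), and, for the failure direction, that a tuple with some $i_k\ge 2$ genuinely occurs for suitable $\lambda\in S$. Once this is pinned down, the identity reduces everything to the single sign $\operatorname{sgn}(\alpha+\beta-1)$.
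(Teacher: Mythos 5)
Your proof is correct and follows essentially the same route as the paper's: reduce condition~\eqref{eq:1} to the per-index scalar comparison of $\lambda_{2i}$ with $\frac{2(\alpha+\beta+1)}{\alpha+\beta}\lambda_i$ (the paper's inequality~\eqref{eq:20}), and then pass to general eigenfunctions by writing $\lambda$ as a sum of $\lambda_{i_k}$'s and invoking Lemma~\ref{lem:1}. Your explicit identity $\lambda_{2i}-\frac{2(\alpha+\beta+1)}{\alpha+\beta}\lambda_i=\frac{2(\alpha+\beta-1)}{\alpha+\beta}\,i(i-1)$ is just the paper's ``straightforward calculation'' made explicit, and your single-variable witness $P_m^{(\alpha-1,\beta-1)}(1-2x_1)$, $m\geq 2$, for the converse makes precise the quantifier (``for all chaos eigenfunctions'') that the paper's one-line proof leaves implicit.
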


\begin{proof}
  A straightforward calculation shows that for $p \geq 0$ it holds that
  \begin{equation}
    \label{eq:20}
    2\lambda_p\frac{\alpha+\beta+1}{\alpha+\beta} \geq \lambda_{2p}
  \end{equation}
  if and only if $\alpha+\beta \leq 1$, with equality only for
$\alpha+\beta=1$. Now write $\lambda = \lambda_{i_1} + \dots +
\lambda_{i_p}$ and apply Lemma~\ref{lem:1}.
\end{proof}

\begin{remark} 
Contrarily to the Gamma and Gaussian distribution, the Beta distribution is not
stable under summation. However, if $F_n$ is in the first eigenspace of the
Jacobi 
diffusion generator, we know from above that
\begin{equation*}
  F_n=\sum_{k=0}^\infty a_k(n) X_k,
\end{equation*}
where $X_k\sim \text{Beta}(\alpha,\beta)$. Thus we can use corollary to give
moment conditions for convergence of a linear combination of Beta random
variables towards another Beta random variable. This might be of independent
interest for statisticians. We stress that of course this scenario is not empty,
for instance, one can trivially take $a_1(n)\to 1$ and $a_k(n)\to 0$ for $k \geq
2$. 
\end{remark}

\subsection{Mixed case}
\label{s-here-we-explain}

In this section we prove Theorem \ref{thm:mix} from the introduction, which we
restate here for convenience. 

\begin{thm:mix}
  Let
$\mathcal{X}=\{\pi_{1,\nu}-\nu\,|\,\nu>-1\}\cup\{\mathcal{N}(0,1)\},$
be the set containing all  centered gamma and Gaussian laws.
Let  $\{X_i\}_{i\geq 1}$ be a sequence of independent random variables such that
for all $i\geq 1$, the law of $X_i$ belongs to $\mathcal{X}$. Now choose $d\geq
1$ and let 
\begin{equation*}
  P_n(x)=\sum_{i_1<i_2<\cdots<i_d}
a_n(i_1,\cdots,i_d) x_{i_1}\cdots x_{i_d}
\end{equation*}
be a sequence of multivariate
homogeneous polynomials of degree $d$. Then the two following statements are
equivalent:  
\begin{enumerate}[(i)]
\item  As $n$ tends to infinity, the sequence $\{ P_n(X) \}_{n \ge 1}$ converges
  in distribution towards $\mathcal{N}(0,1)$. 
\item As $n$ tends to infinity, it holds that $\E(P_n(X)^2)\to 1$ and
  $\E(P_n(X)^4)\to 3$. 
\end{enumerate}
\end{thm:mix}

\begin{proof}

By assumption, for each $i\geq 1$,  the law of $X_i$ belongs to $\mathcal{X}$. Then we set $\mathcal{L}_i$ the univariate diffusion generator associated to $X_i$. Then we define $\LL_n$, by the usual tensorization procedure. For instance if $$(X_1,X_2,X_3)\sim\mathcal{N}(0,1)\otimes\mathcal{N}(0,1)\otimes (\gamma(\nu)-\nu),$$ 
then
$$\LL\phi(x,y,z)= \partial_{1,1}\phi+\partial_{2,2} \phi+z \partial_{3,3}\phi-x\partial_1 \phi-y\partial_2 \phi +(\nu+1-z)\partial_3 \phi.$$
One can check that for all $n\ge 1$ the spectrum of $\LL_n$ is $\N$ and
\begin{eqnarray*}
\textbf{Ker}(\LL_n+d ~\Id)&=& \bigoplus_{\substack{i_1<\cdots <i_d\\d_1+\cdots d_d=d}}\textbf{Ker}(\mathcal{L}_{i_1}+d_{i_1}\Id)\otimes\cdots\otimes \textbf{Ker}(\mathcal{L}_{i_d}+d_{i_d} \Id)\\
\end{eqnarray*}
First we claim that $$P_n(x)=\sum_{i_1<i_2<\cdots<i_d} a_n(i_1,\cdots,i_d) x_{i_1}\cdots x_{i_d}\in\textbf{Ker}(\LL_n+d ~\Id).$$
And the degree of $P_n^2$ is $2d$ so that $P_n^2$ may be expanded over
$\bigoplus_{i\leq 2d}\textbf{Ker}(\LL_n+i \Id).$ Consequently, the eigenspaces
of the mixed structures of Wiener and Laguerre are always chaotic in the sense of
Definition \ref{chaos}. Moreover as one easily verifies that
$\lambda_{2p}=2\lambda_p$, we can apply corollaries \ref{Gamma-aprox} and 
\ref{cor:Gauss-app} and obtain Theorem~\ref{thm:mix} as a particular instance.
\end{proof}

\begin{remark}
We could replace the homogeneous sums by a general eigenfunction (i.e. sums of
products of Hermite and Laguerre polynomials) and also mix Wiener, Laguerre
and/or Jacobi generators. In the latter case, one has to impose some additional
technical conditions involving the parameters of the Jacobi generators, as
$\lambda_{2p}\leq 2 \lambda_p$ doesn't hold in general.
 
\end{remark}
\section{Acknowledgements}

We are grateful to Michel Ledoux for very fruitful discussions and comments
as well as for his warm hospitality at the Universit\'e Paul
Sabatier. We also thank Giovanni Peccati for many useful remarks and Ivan
Nourdin for noting that we can improve the constant in Theorem~\ref{theorem:2}.

\bibliography{refs}{}
\bibliographystyle{amsplain}
\end{document}